\numberwithin{equation}{section} 
\newtheorem{theo}{Theorem}[section]
\newtheorem{lemm}{Lemma}[section]
\newtheorem{defi}{Definition}[section]
\newtheorem{rema}{Remark}[section]
\theoremstyle{definition}
\journal{*}
\begin{document}
\selectlanguage{english}
\begin{frontmatter}
\title{A novel mathematical analysis and threshold reinforcement of a stochastic dengue epidemic model with L\'{e}vy jumps.}
\author{Driss Kiouach\footnote{Corresponding author.\\
E-mail addresses: \href{d.kiouach@uiz.ac.ma}{d.kiouach@uiz.ac.ma} (Driss Kiouach), \href{salim.elazamielidrissi@usmba.ac.ma}{salim.elazamielidrissi@usmba.ac.ma} (Salim El Azami El-idrissi),\\
\hspace*{2.2cm} \href{yassine.sabbar@usmba.ac.ma}{yassine.sabbar@usmba.ac.ma} (Yassine Sabbar).}, Salim El Azami El-idrissi and Yassine Sabbar}
\address{LPAIS Laboratory, Faculty of Sciences Dhar El Mahraz, Sidi Mohamed Ben Abdellah University, Fez, Morocco.}   
\vspace*{1cm}
\begin{abstract}
 The rampant phenomenon of overpopulation and the remarkable increase of human movements over the last decade have caused an aggressive re-emergence of dengue fever, which made it the subject of several research fields. In this regard, mathematical modeling, and notably through compartmental systems, is considered as an eminent tool to obtain a clear overview of this disease's prevalence behavior. In reality, and like all epidemics, the dengue spread phenomenon is often subject to some randomness due to the different natural environment fluctuations. For this reason, a mathematical formulation that considers suitably as much as possible the external stochasticity is indeed required. By this token, we strive in this work to present and analyze a generalized stochastic dengue model that incorporates both slight and huge environmental perturbations. More precisely, our proposed model is represented under the form of an It\^{o}-L\'{e}vy stochastic differential equations system that we demonstrate its mathematical well-posedness and biological significance. Based on some novel analytical techniques, we prove, and under appropriate hypothetical frameworks, of course, two important asymptotic properties, namely: extinction and persistence in the mean. The theoretical findings show that the dynamics of our disturbed dengue model are mainly determined by the parameters that are narrowly related to the small perturbations' intensities and the jumps magnitudes. In the end, we give certain numerical illustrative examples to support our theoretical findings and to highlight the effect of the adopted mathematical techniques on the results.\\[2mm]  
\textbf{ Keywords:} Dengue fever; Stochastic epidemic model;  White noise;  L\'{e}vy jumps; It\^{o}'s formula;  Extinction; \\ \hspace*{2.1cm}Persistence in the mean.\\[3pt] 
\textbf{Mathematics Subject Classification 2020}: 92D30; 37C10; 34A26; 34A12;  60H30; 60H10.
\end{abstract}
\end{frontmatter}

\section{Introduction and model formulation}
Since ancient times, mankind has had to deal with various very dangerous epidemics which are characterized by rapid spread and  high death rate \cite{hays2005epidemics,dobson}. Often caused by some kind of bacteria or viruses unknown in their time, these epidemics killed millions of people, and thus marked the history of several countries, societies and even Humanity in general \cite{snowden2019epidemics}. By looking a little into the past, we can find in this context many examples such as smallpox, tuberculosis,  plague, cholera, typhus, the "Spanish flu" of 1918, and closer to us SARS, Ebola, Zika virus, HIV and most recently COVID-19. Seemingly, the list of all these diseases is very long, and we cannot write it all down here, but what we can guarantee is that the name of  Dengue fever will undoubtedly appear in it \cite{halstead2007dengue}. 
The dengue fever is a mosquito-borne viral infection caused mainly by one of the four dengue virus stereotypes (DENV-1 to DENV-4). According to the  World Health Organization (WHO) \cite{who}, a significant number of dengue infections produce just mild illness, but many others can lead to an acute flu-like illness which later turns into a potentially fatal complication named severe dengue. With thousands of mortalities and nearly four hundred million infections annually around the globe, this disease is considered to be the deadliest vector-borne epidemic after Malaria \cite{bhatt}. Despite the existence of some suggested developments regarding a remedy for the dengue virus \cite{who2}, until now, no effective vaccine or treatment against it are available in the market \cite{khan2021dengue}. So, early-stage detection  and access to appropriate medical care still the only possible solutions at hand to face this murderous disease. At present, more than one hundred countries are under the threat of dengue fever \cite{brady2012refining}, and what makes matters worse is the ability of this epidemic to affect almost all age groups, that is why a good comprehension of its evolution dynamics is firmly required. In this vein, mathematical modeling can be presented as the most useful, efficient and applicable tool for appropriately describing the dengue fever prevalence and perceiving its effects on a host population, especially in the long term.
\par In order to comprehend and supervise the running of dengue infection, a considerable number of mathematical models, notably compartmental ones, have been suggested and treated in details by several works \cite{newton1992model,focks1993dynamic,focks1993dynamic2}. The first attempt to describe the dengue spread was introduced by Newton and Reiter \cite{newton1992model} in the form of an SEIR model that does not take into consideration the mosquitoes populations. Later, and in the same context, Focks et al. \cite{focks1993dynamic,focks1993dynamic2} closed this loophole by using dynamic table models to illustrate the evolution of these populations. Because of its continued re-emergence \cite{morens2013dengue}, the study of the dengue's spread has not stopped at this stage, and even it remains until now an active research subject that inspires several recent papers, see \cite{agusto2018optimal, wang2019dynamics, champagne2019comparison, zhu2019effects} and the references given there. For example, in \cite{agusto2018optimal} the authors constructed a deterministic dengue's propagation model and parametrized it by employing real data from the 2017 dengue outbreak in Pakistan. In \cite{wang2019dynamics}, Wang and Zhao studied the vaccination effect on the prevalence of dengue fever under the framework of coinfection with Zika virus. A brief discussion on the dengue modeling in both deterministic and stochastic levels is presented in \cite{champagne2019comparison}. The dengue dissemination dynamics with the mosquitoes control, temperature elevation and human mobility restriction are detailedly treated in \cite{zhu2019effects}. In \cite{cai2009global}, Cai et al. drew up a dengue epidemic model with bilinear saturated incidence before going to investigate the global stability of the disease-free and the endemic equilibria. The formulation of their model can be presented by the following ordinary differential equations system:\vspace{-2pt}  
\begin{equation}\label{detr}
\left\lbrace
 \begin{aligned}
\dfrac{\mathrm{d}S}{\text{d}t}&=\Lambda-\dfrac{b \beta S \widehat{I}}{1+a\widehat{I}}-\mu S,\\
\dfrac{\mathrm{d}I}{\text{d}t}&=\dfrac{b \beta S \widehat{I}}{1+a\widehat{I}}-\left(\mu+\rho_0+r_1+r_2\right)I,\\
\dfrac{\mathrm{d}R}{\text{d}t}&=\left(r_1+r_2\right)I-\mu R,\\
\dfrac{\mathrm{d}\widehat{S}}{\text{d}t}&=\hat{\Lambda}-b\hat{\beta} \widehat{S} I -\hat{\mu}\widehat{S},\\
\dfrac{\mathrm{d}\widehat{I}}{\text{d}t}&=b\hat{\beta} \widehat{S} I -\hat{\mu}\widehat{I},
\end{aligned}
\right.
\end{equation}\vspace{-2pt} 
with initial conditions $S(0)>0,I(0)>0,R(0)>0,\widehat{S}(0)>0$ and $\widehat{I}(0)>0$. Clearly, the previous system describes the simultaneous evolution of two populations, the first is the host population denoted by $N(t)$ at time $t$ and subdivided into three compartments of susceptible, infected, and recovered individuals, with densities indicated respectively by $S(t)$, $I(t)$ and $R(t)$. The second population is the mosquitoes one which is denoted at time $t$ by $\widehat{N}(t)$ and partitioned in turn into two classes of susceptible and infected individuals with densities $\widehat{S}(t)$ and $\widehat{I}(t)$ respectively. The recovered mosquitoes class is not considered in this model because of the short lifespan of these insects and the relatively long time required to recover from the dengue disease \cite{cai2009global}. The eleven parameters appearing in system \eqref{detr} are  summarized in the following list:
\begin{itemize}
\item[$\bullet$] $\Lambda$ and $\hat{\Lambda}$ are  the susceptible humans and mosquitoes recruitment rates respectively.
\item[$\bullet$] $\beta$ is the transmission rate from mosquitoes to humans. 
\item[$\bullet$] $a$ is a parameter measuring the inhibitory effect due to psychological and behavioral changes in the susceptible human population. 
\item[$\bullet$] $b$ is the biting rate of mosquitoes, in other words, the average number of bites per mosquito in one day.
\item[$\bullet$] $\rho_0$ is the disease-induced death rate for human individuals. 
\item[$\bullet$] $r_1$ and $r_2$ are respectively the human individuals recovery and  treatment rates. For convenience in writing we denote from now on $r_1+r_2$ by $\rho_1$. 
\item[$\bullet$] $\hat{\beta}$ is the transmission rate from humans to mosquitoes.
\item[$\bullet$] $\mu$ and $\hat{\mu}$ are, in this order, the humans and mosquitoes natural death rates.
\end{itemize}
All parameters listed above are assumed to be in the positive real axis $\mathbb{R}_+:=\left\lbrace x\in\mathbb{R}\mid x>0\right\rbrace$ except $a$ that is supposed  to be just nonnegative. Since $R(t)$ does not appear in the other equations of system \eqref{detr},  the dynamical behavior of dengue infection can be deduced just from the following simplified model:
\begin{equation}\label{detr2}
\left\lbrace
 \begin{aligned}
\dfrac{\mathrm{d}S}{\text{d}t}&= \Lambda-b\beta\dfrac{S\widehat{I}}{1+a\widehat{I}}-\mu S,\\
\dfrac{\mathrm{d}I}{\text{d}t}&= b\beta\dfrac{S\widehat{I}}{1+a\widehat{I}}-\left(\mu+\rho_0+\rho_1\right)I,\\
\dfrac{\mathrm{d}\widehat{S}}{\text{d}t}&= \hat{\Lambda}-b\hat{\beta} \widehat{S} I -\hat{\mu}\widehat{S},\\
\dfrac{\mathrm{d}\widehat{I}}{\text{d}t}&= b\hat{\beta} \widehat{S} I -\hat{\mu}\widehat{I}.
\end{aligned}
\right.
\end{equation} 
According to the mathematical analyses effected in \cite{cai2009global}, the spread dynamics of the aforementioned  dengue model is completely determined by  the basic reproduction number which is expressed in this case by $\mathcal{R}_0=\frac{b^2\beta\Lambda\hat{\beta}\hat{\Lambda}}{\mu\left(\mu+\rho_0+\rho_1\right)\hat{\mu}^2}.$   
More precisely, if $\mathcal{R}_0 \leqslant 1$, the system \eqref{detr2} admits  a unique disease-free equilibrium $\mathcal{E}_o=\left(\frac{\Lambda}{\mu},0,0,0\right)$, and it is globally asymptotically stable in the invariant region $\mathcal{D}=\left\lbrace\left(S,I,\widehat{S},\widehat{I}\right)\in\mathbb{R}_+ \mid S+I\leqslant\frac{\Lambda}{\mu}~\text{and}~\widehat{S}+\widehat{I}\leqslant\frac{\hat{\Lambda}}{\hat{\mu}} \right\rbrace$. On the other hand if $\mathcal{R}_0>1$, the disease-free equilibrium is still present, but it becomes unstable. In this case, the system  \eqref{detr2} admits another steady state $\mathcal{E}_\star=\left(S_\star,I_\star,\widehat{S}_\star,\widehat{I}_\star\right)$ called the endemic equilibrium, and it is globally asymptotically stable in the interior of $\mathcal{D}$. The endemic equilibrium components that we have just mentioned are given as follows: $S_\star=\frac{\hat{\mu}^2(\mu+\rho_0+\rho_1)(1+a\widehat{I}_\star)}{b^2 \beta \hat{\beta}(\Lambda-\hat{\mu}\widehat{I}_\star)},~I_\star=\frac{\hat{\mu}^2\widehat{I}_\star}{b \hat{\beta}(\Lambda-\hat{\mu}\widehat{I}_\star)},~\widehat{S}_\star=\frac{\Lambda}{\mu}-\widehat{I}_\star$ and $\widehat{I}_\star=\frac{\mu\hat{\mu}\left(\mu+\rho_0+\rho_1\right)(\mathcal{R}_0-1)}{\hat{\mu}\left(\mu a+b\beta\right)\left(\mu+\rho_0+\rho_1\right)+b^2\beta\hat{\beta}\hat{\Lambda}}.$\vspace*{4pt}
\par It is well known that environmental uncertainties influence the spread of any disease and make the divination of its behavior  more difficult \cite{lahrouz2011global}. In  such a situation  deterministic models, and despite their ability in providing highly informative results and predictions,  are not suitable enough. Therefore, a new or modified mathematical formulation that takes into account this effect of randomness is really required, especially in the framework of dengue fever dissemination analysis. In this regard,  several authors have proposed and developed many stochastic models that are presenting the dengue infection dynamics from different viewpoints and perspectives.  For example, in \cite{otero2010stochastic} Otero and Solari investigated a probabilistic model depicting the evolution of dengue fever within a spatially fixed humans population, that is to say that mosquitoes dispersal is the only one responsible for the disease's spreading. In order to highlight the role of human mobility on dengue prevalence, Barmak et al. \cite{barmak2016modelling} treated a perturbed model incorporating the infected mosquitoes and humans dynamics with different human mobility patterns. In \cite{liu2018stationary}, Liu at al. took into account the randomness's effect on the dengue fever pervasion by assuming that the solution of the model \eqref{detr2} fluctuates normally around its value. Following this point of view, they explored a  disturbed  version of the dengue epidemic model \eqref{detr2}, in which the environmental perturbations are in the form of proportional whites noises to the variables. Hence, the stochastic dengue model that they studied is presented as follows:  
\begin{equation}\label{stob}
\left\lbrace
 \begin{aligned}
	\mathrm{d}S(t)&=\left[\Lambda-\dfrac{b \beta S(t) \widehat{I}(t)}{1+a\widehat{I}(t)}-\mu S(t)\right]\text{d}t+\sigma_1S(t)\text{d}\mathcal{B}_1(t),\\
\mathrm{d}I(t)&=\left[\dfrac{b \beta S(t) \widehat{I}(t)}{1+a\widehat{I}(t)}-\left(\mu+\rho_0+\rho_1\right)I(t)\right]\text{d}t+\sigma_2I(t)\text{d}\mathcal{B}_2(t),\\
\mathrm{d}\widehat{S}(t)&=\left[\hat{\Lambda}-b\hat{\beta} \widehat{S}(t) I(t)-\hat{\mu}\widehat{S}(t)\right]\text{d}t+\sigma_3\widehat{S}(t)\text{d}\mathcal{B}_3(t),\\
\mathrm{d}\widehat{I}(t)&=\left[b\hat{\beta} \widehat{S}(t)I(t)-\hat{\mu}\widehat{I}(t)\right]\text{d}t+\sigma_4\widehat{I}(t)\text{d}\mathcal{B}_4(t),
\end{aligned}
\right.
\end{equation}
where, the nonnegative constants $\sigma_i~(i=1,2,3,4)$ denote the intensities of the mutually independent Brownian motions $\mathcal{B}_i~(i=1,2,3,4)$. These latter, and even all the stochastic processes or random variables that will be met in this paper, are supposed to be defined on a probability space 
$\left(\Omega,\mathcal{F},\mathbb{P}\right)$ equipped with a filtration $(\mathcal{F}_t)_{t\geqslant 0}$ that satisfies the usual conditions (it is increasing and right continuous while $\mathcal{F}_0$ contains all $\mathbb{P}$-null sets).
\par  The insertion of white noises is a very reasonable and prominent approach to model phenomena in which a given quantity is constantly subjected to slight variable fluctuations, for example, those of the environment during a certain disease's spread \cite{zhang2021threshold,shaikhet2020stability,kiouach2021longtime}. But unfortunately, this method is neither appropriate nor sufficient to represent the effect of strong and sudden external disturbances like climate changes, floods, earthquakes, tornadoes, etc \cite{zhou2016threshold,kiouach2020new}. For this reason, we will make recourse to the well known L\'{e}vy processes which are able to formulate adequately this kind of cases. By taking into account this type of random perturbations, we can extend the model \eqref{stob} to the following system of stochastic differential equations with L\'{e}vy jumps (SDELJ for brevity):
\begin{equation}\label{sto0}
\left\lbrace
 \begin{aligned}
	\mathrm{d}S(t)&=\left[\Lambda- \dfrac{b \beta S(t) \widehat{I}(t)}{1+a\widehat{I}(t)}-\mu S(t)\right]\text{d}t+\sigma_1S(t)\text{d}\mathcal{B}_1(t)+\int_{\mathbb{U}}\xi_1(u)S(t^-)\widetilde{\mathcal{N}}(\text{d}t,\text{d}u),\\
\mathrm{d}I(t)&=\left[\dfrac{b \beta S(t) \widehat{I}(t)}{1+a\widehat{I}(t)}-\left(\mu+\rho_0+\rho_1\right)I(t)\right]\text{d}t+\sigma_2I(t)\text{d}\mathcal{B}_2(t)+\int_{\mathbb{U}}\xi_2(u)I(t^-)\widetilde{\mathcal{N}}(\text{d}t,\text{d}u),\\
\mathrm{d}\widehat{S}(t)&=\left[\hat{\Lambda}-b\hat{\beta} \widehat{S}(t) I(t)-\hat{\mu}\widehat{S}(t)\right]\text{d}t+\sigma_3\widehat{S}(t)\text{d}\mathcal{B}_3(t)+\int_{\mathbb{U}}\xi_3(u)\widehat{S}(t^-)\widetilde{\mathcal{N}}(\text{d}t,\text{d}u),\\
\mathrm{d}\widehat{I}(t)&=\left[b\hat{\beta} \widehat{S}(t)I(t)-\hat{\mu}\widehat{I}(t)\right]\text{d}t+\sigma_4\widehat{I}(t)\text{d}\mathcal{B}_4(t)+\int_{\mathbb{U}}\xi_4(u)\widehat{I}(t^-)\widetilde{\mathcal{N}}(\text{d}t,\text{d}u).
\end{aligned}
\right.
\end{equation}
Here and thereafter, $S(t^-), I(t^-),\widehat{S}(t^-)$ and $\widehat{I}(t^-)$ are respectively standing for the left limits of  $S(t), I(t),\widehat{S}(t)$ and $\widehat{I}(t)$. $\mathcal{N}$ is a Poisson counting measure independent of $\mathcal{B}_i~(i=1,2,3,4)$ with compensating martingale $\widetilde{\mathcal{N}}$ and finite characteristic measure $\nu$ that is defined on a measurable set $\mathbb{U}\subset\mathbb{R}_+$. Also, it is assumed that $\nu$ is a Lévy measure such that $\widetilde{\mathcal{N}}(\textcolor{black}{\textup{d}t},\textup{d}u)=\mathcal{N}(\textcolor{black}{\textup{d}t},\textup{d}u)-\nu(\textup{d}u)\textcolor{black}{\textup{d}t}$ and we suppose that the jumps intensities $\xi_i: \mathcal{U}\to\mathbb{R}$ are continuous functions on $\mathcal{U}$. In order to provide an additional degree of realism to our study, we will suppose that the contact between humans and mosquitoes  is homogeneously mixed. In other words, and in analogy with a chemical reaction, we will adopt the mass  action rates $\beta S\widehat{I}$ and $\beta\widehat{S}I$ as the incidence functions of our epidemic model. The last fact will lead us to the following system  which is none other than system \eqref{sto0} but in the case of the inhibitory effect absence:
\begin{equation}\label{sto1}
\left\lbrace
 \begin{aligned}
	\mathrm{d}S(t)&=\left[\Lambda- b \beta S(t) \widehat{I}(t)-\mu S(t)\right]\text{d}t+\sigma_1S(t)\text{d}\mathcal{B}_1(t)+\int_{\mathbb{U}}\xi_1(u)S(t^-)\widetilde{\mathcal{N}}(\text{d}t,\text{d}u),\\
\mathrm{d}I(t)&=\left[b \beta S(t) \widehat{I}(t)-\left(\mu+\rho_0+\rho_1\right)I(t)\right]\text{d}t+\sigma_2I(t)\text{d}\mathcal{B}_2(t)+\int_{\mathbb{U}}\xi_2(u)I(t^-)\widetilde{\mathcal{N}}(\text{d}t,\text{d}u),\\
\mathrm{d}\widehat{S}(t)&=\left[\hat{\Lambda}-b\hat{\beta} \widehat{S}(t) I(t)-\hat{\mu}\widehat{S}(t)\right]\text{d}t+\sigma_3\widehat{S}(t)\text{d}\mathcal{B}_3(t)+\int_{\mathbb{U}}\xi_3(u)\widehat{S}(t^-)\widetilde{\mathcal{N}}(\text{d}t,\text{d}u),\\
\mathrm{d}\widehat{I}(t)&=\left[b\hat{\beta} \widehat{S}(t)I(t)-\hat{\mu}\widehat{I}(t)\right]\text{d}t+\sigma_4\widehat{I}(t)\text{d}\mathcal{B}_4(t)+\int_{\mathbb{U}}\xi_4(u)\widehat{I}(t^-)\widetilde{\mathcal{N}}(\text{d}t,\text{d}u).
\end{aligned}
\right.
\end{equation}  
For the reader's convenience, we summarize and illustrate the transmission mechanisms of the aforementioned dengue model with the help of the flowchart depicted in figure \ref{flow}.
\par Our principal objective in this manuscript is to explore sufficient conditions for extinction and persistence in the mean of the dengue model \eqref{sto1}. These two asymptotic properties are considered to be sufficient for having an excellent idea of the future dengue pandemic situation. The originality of our work lies essentially in the method that we adopt to estimate  the limit values of the temporal averages $\dfrac{\int_0^t\Psi(s)\:\textup{d}s}{t}$, $\dfrac{\int_0^t\Psi^2(s)\:\textup{d}s}{t}$, $\dfrac{\int_0^t\widehat{\Psi}(s)\:\textup{d}s}{t}$  and $\dfrac{\int_0^t\widehat{\Psi}^2(s)\:\textup{d}s}{t}$, where $\Psi$ and $\widehat{\Psi}$ are respectively the positive solutions of the following systems: 
\begin{align}\label{aux1}
\begin{cases}\text{d}\Psi(t)=\left[\Lambda-\mu \Psi(t)\right]\text{d}t+\sigma_1\Psi(t)\text{d}\mathcal{B}_1(t)+\displaystyle{\int_{\mathbb{U}}\xi_1(u)\Psi(t^-)\widetilde{\mathcal{N}}(\text{d}t,\text{d}u)}, \hspace{0.5cm}\forall t>0,\\
\Psi(0)=S(0)>0.
\end{cases}
\end{align} 
and
\begin{align}\label{aux2}
\begin{cases}\text{d}\widehat{\Psi}(t)=\big[\hat{\Lambda}-\hat{\mu}\widehat{\Psi}(t)\big]\text{d}t+\sigma_3\widehat{\Psi}(t)\text{d}\mathcal{B}_3(t)+\displaystyle{\int_{\mathbb{U}}\xi_3(u)\widehat{\Psi}(t^-)\widetilde{\mathcal{N}}(\text{d}t,\text{d}u)}, \hspace{0.5cm}\forall t>0,\\
\widehat{\Psi}(0)=\widehat{S}(0)>0.
\end{cases}
\end{align} 
Our method permits us to bridge the gap left by the use of classical approaches presented  for example in \cite{zhao2018sharp,zhao2019stochastic}.  Also, we use a new and non-standard analytical technique to obtain a sharper threshold for the extinction case. The performed analysis in this paper seems to be very encouraging to study other related epidemic models and especially those which are perturbed with L\'{e}vy noises.
\begin{figure}[hbtp]
\centering
\includegraphics[scale=0.4]{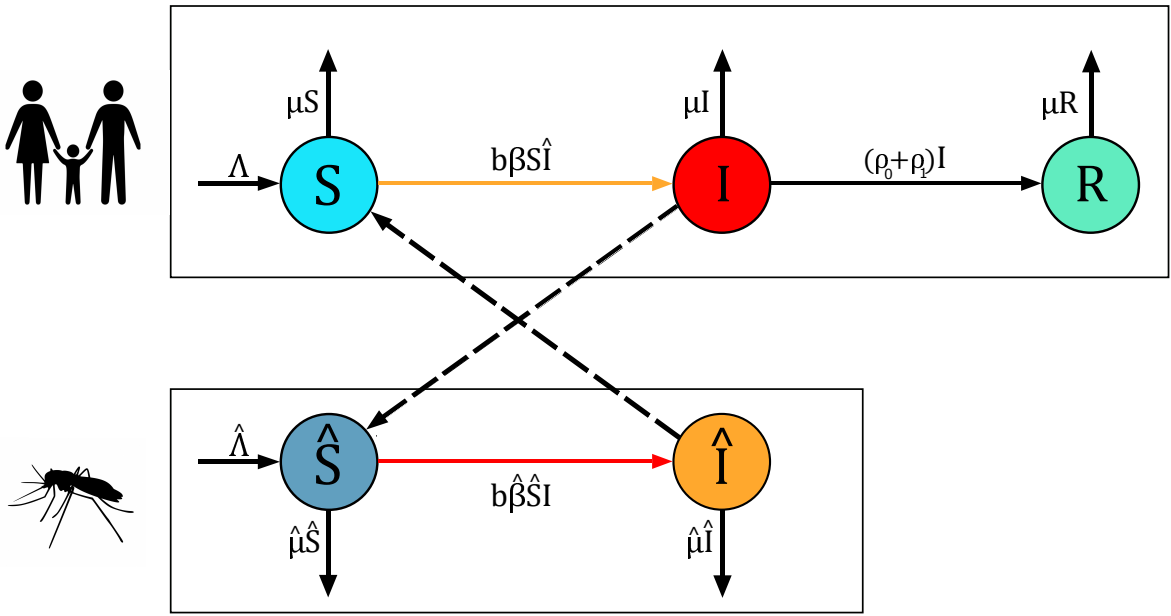}
\caption{Schematic diagram of the proposed dengue compartmental model.}\label{flow}
\end{figure}
\par The remainder of this article is organized as follows: in Section \ref{sec1}, we demonstrate the existence and uniqueness of a positive global-in-time solution to the stochastic dengue model \eqref{sto1}. In Section \ref{sec2}, we provide some sufficient conditions for the dengue disease extinction, whereas those of its persistence in the mean are presented in Section \ref{sec3}. In Section \ref{sec4}, we support our theoretical results with the help of some numerical simulations before drawing the main conclusions of the article in Section \ref{sec5}.
\section{Existence, uniqueness and positivity of the global-in-time solution}\label{sec1}
The first step in exploring the dynamical characteristics of a mathematical population system is to know if it is well-posed or not, where  the well-posedness here designates that the system admits a unique and positive global-in-time solution. In what follows, we will provide some conditions and assumptions under which the well-posedness of the dengue disease model \eqref{sto1} is ensured. But before doing so, let us first introduce the following hypotheses:\vspace*{5pt}
\begin{itemize}
\item[$\bullet$] $\mathbf{(A_1)}:$ The jumps coefficients $(\xi_i)_{1\leqslant i \leqslant 4}$ verify $\mathfrak{M}_1:=\displaystyle{\max_{1\leqslant i \leqslant 4}\left(\int_{\mathbb{U}}\xi_i^2(u)\nu(\text{d}u)\right)<\infty}$.
\item[$\bullet$] $\mathbf{(A_2)}:$ For any $i\in\left\{1,2,3,4\right\}$, $\xi_i(u)>-1$ and $\mathfrak{M}_2:=\displaystyle{\max_{1\leqslant i \leqslant 4}\left(\int_{\mathbb{U}}\Big(\xi_i(u)-\ln\left(1+\xi_i(u)\right)\Big)\nu(\text{d}u)\right)<\infty}$.\vspace*{5pt}
\end{itemize}
\begin{theo}\label{th0}
Let assumptions $\mathbf{(A_1)}$ and $\mathbf{(A_2)}$ hold. Then, for any initial data $\big(S(0),I(0),\widehat{S}(0),\widehat{I}(0)\big)$ belonging to the positive cone $\mathbb{R}_+^4$, there corresponds one and only one solution of the stochastic differential system \eqref{sto1}  on $t\geqslant 0$. Moreover, for all $t\geqslant 0$, this solution will stay in $\mathbb{R}_+^4$ almost surely (a.s. for short).
\end{theo}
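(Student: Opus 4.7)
The strategy is a two-step argument, standard in stochastic epidemiology but sensitive to the jump structure. First, I establish a unique local solution up to an explosion time $\tau_e$; second, I rule out finite-time explosion with a logarithmic Lyapunov function, relying crucially on $\mathbf{(A_1)}$ and $\mathbf{(A_2)}$.

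Since the drift and diffusion coefficients of \eqref{sto1} are locally Lipschitz and continuously differentiable on $\mathbb{R}_+^4$, while the jump coefficients $x \mapsto \xi_i(u)x$ are linear in the state, the standard existence theory for SDEs driven by a Brownian motion and an independent compensated Poisson random measure yields, for every initial datum $(S(0), I(0), \widehat{S}(0), \widehat{I}(0))\in \mathbb{R}_+^4$, a unique maximal $\mathcal{F}_t$-adapted solution defined on a random interval $[0, \tau_e)$. The positivity of this solution on $[0,\tau_e)$ follows from the structure of \eqref{sto1}: whenever a coordinate $X_i$ approaches $0$, its drift is strictly positive (thanks to $\Lambda,\hat\Lambda>0$ and to the nonnegative bilinear terms), while the assumption $\xi_i(u)>-1$ from $\mathbf{(A_2)}$ ensures that the multiplicative jump factor $1+\xi_i(u)$ stays positive, so no jump can push the process out of $\mathbb{R}_+^4$.

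To show $\tau_e = +\infty$ a.s., I would pick $n_0 \geq 1$ large enough that $\min_i X_i(0) > 1/n_0$ and $\max_i X_i(0) < n_0$, and define for each integer $n \geq n_0$ the stopping time
\begin{equation*}
\tau_n = \inf\Big\{t\in [0,\tau_e)\ : \ \min_i X_i(t)\leq 1/n\ \text{or}\ \max_i X_i(t)\geq n\Big\},
\end{equation*}
with the convention $\inf\emptyset = +\infty$, and set $\tau_\infty = \lim_{n\to\infty}\tau_n \leq \tau_e$. It then suffices to prove $\tau_\infty = +\infty$ a.s. Introduce the $C^2$ Lyapunov function
\begin{equation*}
V(x_1,x_2,x_3,x_4) = \sum_{i=1}^4 c_i\bigl(x_i - 1 - \ln x_i\bigr),
\end{equation*}
with positive constants $c_i$ to be chosen; $V$ is nonnegative on $\mathbb{R}_+^4$ and diverges as any $x_i\to 0$ or $x_i\to\infty$. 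Applying It\^o's formula for jump-diffusions, the generator $\mathcal{L}V$ splits into a drift contribution, a quadratic diffusion contribution bounded by $\tfrac{1}{2}\sum_i c_i \sigma_i^2$ (since $v(x)=x-1-\ln x$ satisfies $v''(x)=1/x^2$ and the diffusion coefficient is $\sigma_i x_i$), and a jump-compensator contribution $\sum_{i=1}^4 c_i\int_{\mathbb{U}}\bigl(\xi_i(u)-\ln(1+\xi_i(u))\bigr)\nu(\mathrm{d}u)$, which is bounded by $4 \mathfrak{M}_2 \max_i c_i$ precisely because of $\mathbf{(A_2)}$. The cross terms $b\beta S\widehat{I}$ and $b\hat\beta\widehat{S}I$ cancel between pairs of coordinates, while a careful choice of the $c_i$ (for instance $c_4\hat\mu \geq b\beta c_2$ to absorb the residual $+c_2 b\beta\widehat{I}$, and analogously in the other direction) kills the remaining unbounded linear terms, yielding a uniform bound $\mathcal{L}V \leq K$ on $\mathbb{R}_+^4$.

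The argument concludes by integrating $\mathrm{d}V(X(t))$ on $[0,\tau_n\wedge T]$ and taking expectations: the Brownian and compensated-Poisson martingale parts vanish since their integrands are bounded on $[0,\tau_n]$ (thanks to $\mathbf{(A_1)}$ for the jump part), producing $\mathbb{E}[V(X(\tau_n\wedge T))] \leq V(X(0)) + KT$. On the event $\{\tau_n \leq T\}$, at least one coordinate equals $n$ or $1/n$, so $V(X(\tau_n)) \geq \min_i c_i \cdot \min\{n-1-\ln n,\ 1/n -1 + \ln n\}$, a quantity tending to $+\infty$ with $n$. Markov's inequality then forces $\mathbb{P}(\tau_n\leq T)\to 0$, and since $T$ is arbitrary, $\tau_\infty = +\infty$ a.s. The main obstacle, which motivates and is resolved by hypothesis $\mathbf{(A_2)}$, is controlling the jump-compensator term $\int_{\mathbb{U}}(\xi_i - \ln(1+\xi_i))\nu(\mathrm{d}u)$ generated by the logarithmic part of $V$: without the finiteness of $\mathfrak{M}_2$ this contribution would spoil the bound $\mathcal{L}V\leq K$ and the whole Lyapunov argument would collapse.
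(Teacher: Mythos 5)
Your overall strategy — unique maximal local solution, localizing stopping times, a logarithmic Lyapunov function with $\mathcal{L}V\leqslant K$, and the concluding expectation/Markov-inequality step — is exactly the paper's route, and your identification of $\mathbf{(A_2)}$ as the hypothesis controlling the jump compensator $\int_{\mathbb{U}}\big(\xi_i(u)-\ln(1+\xi_i(u))\big)\nu(\mathrm{d}u)$ is correct. The genuine gap is in the specific form of $V$. With $V(x)=\sum_{i=1}^4 c_i(x_i-1-\ln x_i)$, the single constant $c_i$ multiplies \emph{both} the linear part $x_i$ and the logarithmic part $-\ln x_i$. The unbounded bilinear contributions $(c_2-c_1)\,b\beta S\widehat{I}$ and $(c_4-c_3)\,b\hat{\beta}\widehat{S}I$ come from the linear parts, so suppressing them forces $c_2\leqslant c_1$ and $c_4\leqslant c_3$; the residual terms $+c_1 b\beta\widehat{I}$ and $+c_3 b\hat{\beta}I$ come from the logarithmic parts of the two susceptible coordinates (note they carry $c_1$ and $c_3$, not $c_2$ as you wrote) and must be absorbed by $-c_4\hat{\mu}\widehat{I}$ and $-c_2(\mu+\rho_0+\rho_1)I$, forcing $c_4\hat{\mu}\geqslant c_1 b\beta$ and $c_2(\mu+\rho_0+\rho_1)\geqslant c_3 b\hat{\beta}$. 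Chaining these four inequalities gives
$$\dfrac{b\beta}{\hat{\mu}}\leqslant\dfrac{c_4}{c_1}\leqslant\dfrac{c_3}{c_2}\leqslant\dfrac{\mu+\rho_0+\rho_1}{b\hat{\beta}},$$
so admissible constants $c_i$ exist only when $b^2\beta\hat{\beta}\leqslant\hat{\mu}(\mu+\rho_0+\rho_1)$ — a restriction on the model parameters that the theorem does not assume. For large biting rate $b$ no choice of the $c_i$ yields a uniform bound $\mathcal{L}V\leqslant K$, and the Lyapunov argument as you set it up collapses precisely where it is needed.

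The repair is the one the paper implements: decouple the two coefficients by taking
$$V(x)=\left(x_1-\upalpha-\upalpha\ln\left(\dfrac{x_1}{\upalpha}\right)\right)+\big(x_2-1-\ln x_2\big)+\left(x_3-\upalpha-\upalpha\ln\left(\dfrac{x_3}{\upalpha}\right)\right)+\big(x_4-1-\ln x_4\big).$$
Every linear part now carries coefficient $1$, so the mass-action terms cancel identically with no constraint, while only the logarithmic parts of the susceptible coordinates carry the free weight $\upalpha$; choosing $\upalpha=\dfrac{\hat{\mu}}{b\beta}\wedge\dfrac{\mu+\rho_0+\rho_1}{b\hat{\beta}}$ makes the coefficients of $I$ and $\widehat{I}$ nonpositive for arbitrary positive parameters. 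With that substitution the rest of your argument (positivity preserved by jumps since $\xi_i(u)>-1$, boundedness of the stopped martingale parts via $\mathbf{(A_1)}$, and the final estimate $\mathbb{E}\big[V(X(\uptau_k\wedge T))\big]\leqslant V(X(0))+KT$) goes through and coincides with the paper's proof.
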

\begin{proof}
As supposed in the statement of the theorem, let the hypotheses $(A_1)$ and $(A_2)$ hold. From $(A_1)$, we can easily observe that the coefficients of the system \eqref{sto1} are locally Lipschitz continuous. Hence, by using Theorem 1.19 of \cite{oksendal2007applied}, one can immediately conclude that for any given initial value $\big(S(0),I(0),\widehat{S}(0),\widehat{I}(0)\big)\in\mathbb{R}_+^4$ there corresponds a unique maximal local solution $\big(S(t),I(t),\widehat{S}(t),\widehat{I}(t)\big)$ of \eqref{sto1} on an interval $\left[0,\uptau_e\right)$, where $\uptau_e$ is the explosion time \cite{mao2007stochastic}. At this point, our objective will be to show the globality in time of this solution, in other words,  $\uptau_e=\infty$ almost surely. To this end, let $k_0$ be  a sufficiently large positive integer such that $\big(S(0),I(0),\widehat{S}(0),\widehat{I}(0)\big)\in\left(\dfrac{1}{k_0},k_0\right)$, and consider for any integer $k\geqslant k_0$  the following quantity, which is well defined by the adoption of the convention $\inf \emptyset=\infty$:
\begin{align}\label{tauk}
\uptau_k &=\inf\left\lbrace t\in\left[0,\uptau_e\right)\mid \big(S(t),I(t),\widehat{S}(t),\widehat{I}(t)\big)\not\in \left(\dfrac{1}{k},k\right)^4\right\rbrace\in \mathbb{R}_+\cup \left\lbrace \infty \right\rbrace.
\end{align}
Clearly,  $\left(\dfrac{1}{k},k\right)^4$ is an open subset of $\mathbb{R}^4$, so it follows from Theorem 3.1 of \cite{mao2007stochastic} that  $\uptau_k$ is a stopping time for all $k\geqslant k_0$. Set $\uptau_\infty=\lim\limits_{k\to \infty} \uptau_k$, it is obvious that  $\left(\uptau_k\right)_{k \geqslant k_0}$ is increasing; hence, $\lim\limits_{k\to \infty} \uptau_k=\sup\limits_{k\geqslant k_0} \uptau_k$, and by virtue of Lemma 2.11 in \cite{karatzas1998brownian} $\sup\limits_{k\geqslant k_0} \uptau_k$ is a stopping time, then so is $\uptau_\infty$. Evidently, $\uptau_\infty\leqslant \uptau_e$ (see \cite{kiouach2021advanced}  for more details), so $\uptau_e=\infty$ will follow immediately once we show that $\uptau_\infty=\infty$ a.s.,  and this is exactly what we are going to do for accomplishing our proof. Suppose that $\uptau_\infty=\infty$ a.s. is false, then there is necessarily two positive constants $\varepsilon$ and $\eta$ such that \vspace*{3pt}
\begin{equation}\label{1}
\mathbb{P}\left(\uptau_k\leqslant \eta\right)> \varepsilon~~\text{for all}~k\geqslant k_0.
\end{equation}\vspace*{3pt}
Consider the $\mathcal{C}^2$-function $V:\mathbb{R}_+^4\to\left[0,+\infty\right)$ defined by
$$V\left(x_1,x_2,x_3,x_4\right)=\left(x_1-\upalpha-\upalpha\ln\left(\dfrac{x_1}{\upalpha}\right)\right)+\big(x_2-1-\ln(x_2)\big)+\left(x_3-\upalpha-\upalpha\ln\left(\dfrac{x_3}{\upalpha}\right)\right)+\big(x_4-1-\ln(x_4)\big),$$
where  $\upalpha$ is a positive constant to be determined suitably later. The nonnegativity of this function can be observed from the  inequality $a-1-\ln(a)\geqslant 0,~\forall a>0$.
According to the general multi-dimensional It\^{o}'s formula \big(see \cite[page 8]{oksendal2007applied}\big), we have for all $k \geqslant k_0$ and $t\in\left[0,\uptau_k \right)$\\[3pt]
\begin{align*}
\text{d}V\big(S(t),I(t),\widehat{S}(t),\widehat{I}(t)\big)&=\mathcal{L}V\big(S(t),I(t),\widehat{S}(t),\widehat{I}(t)\big)~\text{d}t+\left(S(t)-\upalpha\right)\sigma_1~\mathrm{d}\mathcal{B}_1(t)+\left(I(t)-1\right)\sigma_2~\mathrm{d}\mathcal{B}_2(t)\\[3pt]
&\quad+\left(\widehat{S}(t)\hspace{-2pt}-\hspace{-2pt}\upalpha\right)\sigma_3\mathrm{d}\mathcal{B}_3(t)\hspace{-1pt}+\hspace{-1pt}\left(\widehat{I}(t)\hspace{-2pt}-\hspace{-2pt}1\right)\sigma_4\mathrm{d}\mathcal{B}_4(t)\hspace{-1pt}+\hspace{-2pt}\int_{\mathbb{U}}\Big(\xi_1(u)S(t^-)-\upalpha\ln\left(1+\xi_1(u)\right)\Big)\widetilde{\mathcal{N}}(\text{d}t,\text{d}u)\\[3pt]
 &\quad+\int_{\mathbb{U}}\Big(\xi_2(u)I(t^-)\hspace{-2pt}-\hspace{-2pt}\ln\left(1+\xi_2(u)\right)\Big)\widetilde{\mathcal{N}}(\text{d}t,\text{d}u)+\hspace{-2pt}\int_{\mathbb{U}}\Big(\xi_3(u)\widehat{S}(t^-)-\upalpha\ln\left(1+\xi_3(u)\right)\Big)\widetilde{\mathcal{N}}(\text{d}t,\text{d}u)\\[3pt]
&\quad+\int_{\mathbb{U}}\Big(\xi_4(u)\widehat{I}(t^-)-\ln\left(1+\xi_4(u)\right)\Big)\widetilde{\mathcal{N}}(\text{d}t,\text{d}u),
\end{align*}
where $\mathcal{L}V\big(S(t),I(t),\widehat{S}(t),\widehat{I}(t)\big)$ is given  by \vspace*{8pt}
\begin{align*}
\mathcal{L}V\big(S(t),I(t),\widehat{S}(t),\widehat{I}(t)\big)&=\left(1-\dfrac{\upalpha}{S(t)}\right)\hspace{-2pt}\times\hspace{-2pt}\left(\Lambda-b \beta S(t) \widehat{I}(t)-\mu S(t)\right)+\left(1-\dfrac{1}{I(t)}\right)\hspace{-2pt}\times\hspace{-2pt}\left(b \beta S(t) \widehat{I}(t)-\left(\mu+\rho_0+\rho_1\right)I(t)\right)\\[3pt]
&\quad+\left(1-\dfrac{\upalpha}{\widehat{S}(t)}\right)\left(\hat{\Lambda}-b\hat{\beta} \widehat{S}(t) I(t)-\hat{\mu}\widehat{S}(t)\right)+\left(1-\dfrac{1}{\widehat{I}(t)}\right)\left(b\hat{\beta} \widehat{S}(t)I(t)-\hat{\mu}\widehat{I}(t)\right)\\[3pt]
&\quad +\dfrac{1}{2}\left(\upalpha\sigma_1^2+\sigma_2^2+\upalpha\sigma_3^2+\sigma_4^2\right)+\upalpha\int_{\mathbb{U}}\Big(\xi_1(u)-\ln\left(1+\xi_1(u)\right)\Big)\nu(\text{d}u)\\[3pt]
&\quad+\int_{\mathbb{U}}\Big(\xi_2(u)-\ln\left(1+\xi_2(u)\Big)\right)\nu(\text{d}u)+\upalpha\int_{\mathbb{U}}\Big(\xi_3(u)-\ln\left(1+\xi_3(u)\right)\Big)\nu(\text{d}u)\\[3pt]
&\quad+\int_{\mathbb{U}}\Big(\xi_4(u)-\ln\left(1+\xi_4(u)\right)\Big)\nu(\text{d}u).
\end{align*}
Therefore
\begin{align*}
\mathcal{L}V\big(S(t),I(t),\widehat{S}(t),\widehat{I}(t)\big)&=\left(\Lambda-\mu S(t)-\left(\mu+\rho_0+\rho_1\right)I(t)+\hat{\Lambda}-\hat{\mu}\widehat{I}(t)-\hat{\mu}\widehat{S}(t)\right)-\dfrac{\upalpha\Lambda}{S(t)}+\upalpha b\beta\widehat{I}(t)+\upalpha\mu\\
&\quad-b\beta S(t) \dfrac{\widehat{I}(t)}{I(t)}+\left(\mu+\rho_0+\rho_1\right)-\dfrac{\upalpha\hat{\Lambda}}{\widehat{S}(t)}+\upalpha b\hat{\beta}I(t)+\upalpha\hat{\mu}-b\dfrac{\hat{\beta}\widehat{S}(t)I(t)}{\widehat{I}(t)}+\hat{\mu}+\dfrac{1}{2}\big(\upalpha\sigma_1^2\\
&\quad+\sigma_2^2+\upalpha\sigma_3^2+\sigma_4^2\big)+\upalpha\int_{\mathbb{U}}\Big(\xi_1(u)-\ln\left(1+\xi_1(u)\right)\Big)\nu(\text{d}u)+\hspace*{-2pt}\int_{\mathbb{U}}\Big(\xi_2(u)\hspace*{-1pt}-\hspace*{-1pt}\ln\left(1+\xi_2(u)\right)\Big)\nu(\text{d}u)\\
&\quad+\upalpha\int_{\mathbb{U}}\Big(\xi_3(u)-\ln\left(1+\xi_3(u)\right)\Big)\nu(\text{d}u)+\int_{\mathbb{U}}\Big(\xi_4(u)-\ln\left(1+\xi_4(u)\right)\Big)\nu(\text{d}u)\\
&\leqslant \left(\upalpha b\hat{\beta}-\left(\mu+\rho_0+\rho_1\right)\right)I(t)\hspace{-0.5pt}+\hspace{-0.5pt}\left(\upalpha b\beta-\hat{\mu}\right)\widehat{I}(t)+\big(\Lambda+\hat{\Lambda}+\upalpha\mu+\left(\mu+\rho_0+\rho_1\right)+\upalpha\hat{\mu}+\hat{\mu}\big)\\
&\quad +\dfrac{1}{2}\left(\upalpha\sigma_1^2+\sigma_2^2+\upalpha\sigma_3^2+\sigma_4^2\right)+\upalpha\int_{\mathbb{U}}\Big(\xi_1(u)-\ln\left(1+\xi_1(u)\right)\Big)\nu(\text{d}u)\\
&\quad+\int_{\mathbb{U}}\Big(\xi_2(u)-\ln\left(1+\xi_2(u)\right)\Big)\nu(\text{d}u)+\upalpha\int_{\mathbb{U}}\Big(\xi_3(u)-\ln\left(1+\xi_3(u)\right)\Big)\nu(\text{d}u)\\
&\quad+\int_{\mathbb{U}}\Big(\xi_4(u)-\ln\left(1+\xi_4(u)\right)\Big)\nu(\text{d}u).
\end{align*}
By choosing $\upalpha=\dfrac{\hat{\mu}}{b\beta}\wedge\dfrac{\mu+\rho_0+\rho_1}{b\hat{\beta}}$, we obtain
\begin{align*}
\mathcal{L}V\big(S(t),I(t),\widehat{S}(t),\widehat{I}(t)\big)&\leqslant \mathfrak{A},
\end{align*}
and $\mathfrak{A}$ here is to the positive constant given by 
\begin{align*}
\mathfrak{A}&=\big(\Lambda+\hat{\Lambda}+\upalpha\mu+\left(\mu+\rho_0+\rho_1\right)+\upalpha\hat{\mu}+\hat{\mu}\big)+\dfrac{1}{2}\left(\upalpha\sigma_1^2+\sigma_2^2+\upalpha\sigma_3^2+\sigma_4^2\right)+\upalpha\int_{\mathbb{U}}\Big(\xi_1(u)-\ln\left(1+\xi_1(u)\right)\Big)\nu(\text{d}u)\\
&\quad +\int_{\mathbb{U}}\Big(\xi_2(u)-\ln\left(1+\xi_2(u)\right)\Big)\nu(\text{d}u)+\upalpha\int_{\mathbb{U}}\Big(\xi_3(u)-\ln\left(1+\xi_3(u)\right)\Big)\nu(\text{d}u)+\int_{\mathbb{U}}\Big(\xi_4(u)-\ln\left(1+\xi_4(u)\right)\Big)\nu(\text{d}u).
\end{align*}
The  remainder of the proof runs on the same lines as the demonstration of Theorem 2.1 in \cite{zhu2016analysis}, so we omit it here for the sake  of space. 
\end{proof}
\section{Stochastic extinction of the dengue disease}\label{sec2}
In mathematical epidemiology, our prime concern after proving the well-posedness is to know if the disease will disappear or it will continue to exist. In this section, we will do our utmost to find some conditions for the dengue disease extinction expressed in terms of noises intensities, jumps coefficients and system parameters. For the reader's convenience, the persistence in the mean will be covered and analyzed separately in the next section. For the sake of brevity, we will adopt from now on  the following notations: \vspace*{5pt}
\begin{itemize}
\begin{minipage}[t]{0.4\linewidth}
\item[$\bullet$] $\Sigma:=\max\big\lbrace \sigma_i^2 \mid i\in\{1,2,3,4\} \big\rbrace.$
\item[$\bullet$] $\widetilde{\xi}(u):=\max\big\lbrace \xi_i(u) \mid i\in\{1,2,3,4\} \big\rbrace.$
\item[$\bullet$] $\widetilde{\mathlarger{\mathlarger{\theta}}}_p(u):=\left(1+\widetilde{\xi}(u)\right)^p-p\times\widetilde{\xi}(u)-1.$
\item[$\bullet$] $\mathlarger{\mathlarger{\theta}}_p(u):=\max\left\lbrace\widetilde{\mathlarger{\mathlarger{\theta}}}_p(u),\utilde{\mathlarger{\mathlarger{\theta}}}_p(u)\right\rbrace.$
\end{minipage}
\begin{minipage}[t]{0.4\linewidth}
\item[$\bullet$] $\rho:=\rho_0+\rho_1.$
\item[$\bullet$] $\utilde{\xi}(u):=\min\big\lbrace \xi_i(u) \mid i\in\{1,2,3,4\} \big\rbrace.$
\item[$\bullet$] $\utilde{\mathlarger{\mathlarger{\theta}}}_p(u):=\left(1+\utilde{\xi}(u)\right)^p-p\times\utilde{\xi}(u)-1.$
\item[$\bullet$] $\varrho_p=\displaystyle{\int_{\mathbb{U}}} \mathlarger{\theta}_p(u) \nu(\text{d}u).$
\end{minipage}
\end{itemize}\vspace*{5pt}
Before stating the main result of this section, we must firstly give the following useful lemma: 
\begin{lemm}\label{lem}
Let \hspace{-2pt} $\big(\hspace{-0.5pt}S(t),\hspace{-0.5pt}I(t),\hspace{-0.5pt}\widehat{S}(t),\hspace{-0.5pt}\widehat{I}(t)\big)$\hspace{-2pt} be the\hspace{-1pt} solution\hspace{-0.5pt} of \eqref{sto1} that\hspace{-1pt} starts\hspace{-1pt} from\hspace{-1pt} a\hspace{-1pt} given\hspace{-1pt} initial value $\hspace{-2pt}$ $\big(\hspace{-0.5pt}S_0,\hspace{-0.5pt}I_0,\hspace{-0.5pt}\widehat{S}_0,\hspace{-0.5pt}\widehat{I}_0\big)\in\mathbb{R}_+^4$. Let also $\Psi(t)$ and  $\widehat{\Psi}(t)$ be respectively the solutions of \eqref{aux1} and \eqref{aux2} that satisfy $\Psi(0)=S(0)$ and $\widehat{\Psi}(0)=\widehat{S}(0)$. If the following conditions are satisfied:
\begin{itemize}
\item[$\bullet$] $\mathbf{(A_3)}:$ There exists a real constant $p>2$ such that $\Delta_p:=\mu\wedge\hat{\mu}-\dfrac{p-1}{2}\Sigma-\dfrac{\varrho_p}{p}>0$.
\item[$\bullet$] $\mathbf{(A_4)}:$ $\mathfrak{M}_3:=\displaystyle{\max_{1\leqslant i \leqslant 4}\left(\int_{\mathbb{U}}\Big(\big(1+\xi_i(u)\big)^2-1\Big)^2\nu(\text{d}u)\right)<\infty}.$
\end{itemize}
Then \vspace{4pt}
\begin{enumerate}[label=$(\textup{\alph*})$]
 \item $\lim\limits_{t\to\infty}\dfrac{\Psi(t)}{t}=0,~~\lim\limits_{t\to\infty}\dfrac{\Psi^2(t)}{t}=0,~~\lim\limits_{t\to\infty}\dfrac{S(t)}{t}=0,~~\text{and}~~\lim\limits_{t\to\infty}\dfrac{I(t)}{t}=0~~a.s.$ 
 \item $\lim\limits_{t\to\infty}\dfrac{\widehat{\Psi}(t)}{t}=0,~~\lim\limits_{t\to\infty}\dfrac{\widehat{\Psi}^2(t)}{t}=0,~~\lim\limits_{t\to\infty}\dfrac{\widehat{S}(t)}{t}=0,~\text{and}~\lim\limits_{t\to\infty}\dfrac{\widehat{I}(t)}{t}=0~~a.s.$ 
 \item $\lim\limits_{t\to\infty}\dfrac{\int_0^t\Psi(s)\:\textup{d}\mathcal{B}_1(s)}{t}=0,~~\lim\limits_{t\to\infty}\dfrac{\int_0^t\Psi^2(s)\:\textup{d}\mathcal{B}_1(s)}{t}=0,~~\lim\limits_{t\to\infty}\dfrac{\int_0^t S(s)\:\textup{d}\mathcal{B}_1(s)}{t}=0,~\text{and}~\lim\limits_{t\to\infty}\dfrac{\int_0^t I(s)\:\textup{d}\mathcal{B}_2(s)}{t}=0~a.s.$ 
 \item $\lim\limits_{t\to\infty}\dfrac{\int_0^t\widehat{\Psi}(s)\:\textup{d}\mathcal{B}_3(s)}{t}=0,~~\lim\limits_{t\to\infty}\dfrac{\int_0^t\widehat{\Psi}^2(s)\:\textup{d}\mathcal{B}_3(s)}{t}=0,~~\lim\limits_{t\to\infty}\dfrac{\int_0^t \widehat{S}(s)\:\textup{d}\mathcal{B}_3(s)}{t}=0,~\text{and}~\lim\limits_{t\to\infty}\dfrac{\int_0^t \widehat{I}(s)\:\textup{d}\mathcal{B}_4(s)}{t}=0~a.s.$
  \item $\lim\limits_{t\to\infty}\dfrac{\int_0^t \int_{\mathbb{U}}\xi_1(u)\Psi(s^-)\widetilde{\mathcal{N}}(\textup{d}s,\textup{d}u)}{t}=0~\text{and}~\lim\limits_{t\to\infty}\dfrac{\int_0^t \int_{\mathbb{U}}\big((1+\xi_1(u))^2-1\big)\Psi^2(s^-)\widetilde{\mathcal{N}}(\textup{d}s,\textup{d}u)}{t}=0~~a.s.$
 \item $\lim\limits_{t\to\infty}\dfrac{\int_0^t \int_{\mathbb{U}}\xi_3(u)\widehat{\Psi}(s^-)\widetilde{\mathcal{N}}(\textup{d}s,\textup{d}u)}{t}=0~\text{and}~\lim\limits_{t\to\infty}\dfrac{\int_0^t \int_{\mathbb{U}}\big((1+\xi_3(u))^2-1\big)\widehat{\Psi}^2(s^-)\widetilde{\mathcal{N}}(\textup{d}s,\textup{d}u)}{t}=0~~a.s.$
 \item $\lim\limits_{t\to\infty}\dfrac{\int_0^t \int_{\mathbb{U}}\xi_1(u)S(s^-)\widetilde{\mathcal{N}}(\textup{d}s,\textup{d}u)}{t}=0~\text{and}~\lim\limits_{t\to\infty}\dfrac{\int_0^t \int_{\mathbb{U}}\xi_2(u)I(s^-)\widetilde{\mathcal{N}}(\textup{d}s,\textup{d}u)}{t}=0~~a.s.$
 \item $\lim\limits_{t\to\infty}\dfrac{\int_0^t \int_{\mathbb{U}}\xi_3(u)\widehat{S}(s^-)\widetilde{\mathcal{N}}(\textup{d}s,\textup{d}u)}{t}=0~\text{and}~\lim\limits_{t\to\infty}\dfrac{\int_0^t \int_{\mathbb{U}}\xi_4(u)\widehat{I}(s^-)\widetilde{\mathcal{N}}(\textup{d}s,\textup{d}u)}{t}=0~~a.s.$
\end{enumerate}\vspace{4pt}
\end{lemm}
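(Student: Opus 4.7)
The entire lemma can be funneled through one key estimate: a uniform $L^p$-bound on each component for the exponent $p>2$ supplied by $\mathbf{(A_3)}$. I would first apply the It\^{o}-L\'evy formula to $V(x)=x^p$ along $\Psi$; its generator decomposes as $p\Psi^{p-1}(\Lambda-\mu\Psi)+\tfrac{1}{2}p(p-1)\sigma_1^2\Psi^p+\Psi^p\int_{\mathbb{U}}\bigl[(1+\xi_1(u))^p-p\xi_1(u)-1\bigr]\nu(\text{d}u)$, and the very definition of $\theta_p$ as the pointwise maximum of $\widetilde{\theta}_p$ and $\utilde{\theta}_p$, combined with $\xi_1(u)\in[\utilde{\xi}(u),\widetilde{\xi}(u)]$ and the fact that $x\mapsto(1+x)^p-px-1$ is convex (hence maximized at an endpoint), bounds the L\'evy drift by $\varrho_p\Psi^p$. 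A Young-type estimate on $p\Lambda\Psi^{p-1}$ then produces the differential inequality $\tfrac{\text{d}}{\text{d}t}\mathbb{E}[\Psi^p(t)]\le C-p\Delta_p\mathbb{E}[\Psi^p(t)]$, whence $\sup_{t\ge 0}\mathbb{E}[\Psi^p(t)]<\infty$ by Gr\"onwall. The identical calculation carried out on $\widehat{\Psi}^p$, $(S+I)^p$ and $(\widehat{S}+\widehat{I})^p$---using $\sigma_i^2\le\Sigma$ to aggregate the two diffusion terms and the two-sided maximum in the definition of $\theta_p$ to aggregate the two jump terms---yields uniform $L^p$-control of $\Psi,\widehat{\Psi},S,I,\widehat{S},\widehat{I}$.

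To extract the almost-sure statements (a) and (b), I would next upgrade this moment bound to a pathwise supremum bound on each unit interval $[n,n+1]$: integrating the SDE for $\Psi^p$ and combining Doob's maximal inequality (Brownian part) with Kunita's BDG-type inequality (Poisson part, where $\mathbf{(A_4)}$ controls the quadratic variation produced by $((1+\xi_1)^p-1)\Psi^p$) gives $\mathbb{E}[\sup_{n\le t\le n+1}\Psi^p(t)]\le K$ uniformly in $n$. Chebyshev then yields $\mathbb{P}\bigl(\sup_{n\le t\le n+1}\Psi(t)>n^\alpha\bigr)\le K/n^{p\alpha}$, which is summable whenever $\alpha>1/p$; since $p>2$ the interval $(1/p,1/2)$ is nonempty, so a Borel-Cantelli argument with such an $\alpha$ simultaneously produces $\Psi(t)/t\to 0$ and $\Psi^2(t)/t\to 0$ a.s. The pointwise dominations $S,I\le S+I$ and $\widehat{S},\widehat{I}\le\widehat{S}+\widehat{I}$, together with the analogous supremum bound on $\widehat{\Psi}^p$, transfer these conclusions to all six state variables and close (a) and (b).

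For the martingale statements (c)--(h) my plan is to invoke the classical strong law of large numbers for local martingales (continuous as well as purely discontinuous): whenever a local martingale $M$ satisfies $\limsup_{t\to\infty}\langle M\rangle_t/t<\infty$ almost surely, one has $M(t)/t\to 0$ a.s. In each of the eight integrals appearing in (c)--(h), $\langle M\rangle_t$ is of the form $\int_0^t(\cdot)^2\,\text{d}s$ or $\int_0^t\int_{\mathbb{U}}(\cdot)^2\,\nu(\text{d}u)\,\text{d}s$, so assumption $\mathbf{(A_1)}$ or $\mathbf{(A_4)}$ handles the $u$-integral while the moment bounds of Step~1 handle the $s$-integrand in expectation, giving $\mathbb{E}[\langle M\rangle_t]\le Ct$; a second Chebyshev-Borel-Cantelli pass applied to $\langle M\rangle_n/n$ then upgrades this to the almost-sure bound required by the SLLN. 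The main obstacle is that the integrands appearing in (c), (d), (e), (f) involve $\Psi^2$ or $\widehat{\Psi}^2$, so their quadratic variations feature $\Psi^4$ or $\widehat{\Psi}^4$; the associated $L^4$-control is not automatic from a single exponent $p>2$ in $\mathbf{(A_3)}$ and forces one to re-execute Step~1 at a larger exponent $q\ge 4$, which is legitimate provided $\Delta_q>0$ (a condition tacitly covered by taking $p\ge 4$ in the hypothesis). The integrals in (g) and (h), which involve only $S,I,\widehat{S},\widehat{I}$ linearly, need nothing beyond the original $L^p$-bound and follow by the same SLLN argument verbatim.
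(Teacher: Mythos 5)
The paper does not actually supply a proof of this lemma: it is dispatched in one line by deferring to Lemma 2.5 of \cite{kiouach2020new}. Your proposal reconstructs exactly the argument such references use --- a uniform $p$-th moment bound obtained by applying the It\^{o}--L\'evy formula to $x^p$ (with the convexity of $x\mapsto(1+x)^p-px-1$ reducing the jump drift to $\varrho_p$, and $\Delta_p>0$ supplying the negative linear term after Young's inequality), an upgrade to $\mathbb{E}\big[\sup_{n\leqslant t\leqslant n+1}(\cdot)^p\big]\leqslant K$ via Doob/Burkholder--Davis--Gundy and Kunita's inequality under $\mathbf{(A_4)}$, a Chebyshev--Borel--Cantelli pass giving the pathwise sublinear growth in (a)--(b), and the strong law of large numbers for local martingales for (c)--(h). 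In approach you are therefore aligned with what the omitted proof would be.

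The one substantive point is the one you flagged yourself, and it deserves emphasis because it is a gap in the lemma as stated, not merely in your write-up. The second assertions in (c)--(f) involve the martingales $\int_0^t\Psi^2(s)\,\textup{d}\mathcal{B}_1(s)$ and $\int_0^t\int_{\mathbb{U}}\big((1+\xi_1(u))^2-1\big)\Psi^2(s^-)\,\widetilde{\mathcal{N}}(\textup{d}s,\textup{d}u)$ (and their hatted analogues), whose quadratic variations are driven by $\Psi^4$. Every standard route to $M_t/t\to0$ --- verifying $\limsup_t\langle M\rangle_t/t<\infty$, or $\int_0^\infty(1+s)^{-2}\,\textup{d}\langle M\rangle_s<\infty$, or a BDG-plus-Borel--Cantelli estimate of $\sup_{t\leqslant n}|M_t|$ --- then requires fourth-moment control, and your Step-2 bound $\Psi(t)\lesssim t^{\alpha}$ with $\alpha>1/p$ only closes the argument when $\alpha<1/4$, i.e.\ when $p>4$. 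Hypothesis $\mathbf{(A_3)}$ only guarantees some $p>2$, so for $p\in(2,4]$ these four limits are not established; since they are precisely the limits invoked to prove items (c) and (f) of Lemma \ref{lem2}, and hence Theorem \ref{th}, the issue propagates. Your fix (re-run Step~1 at an exponent $q>4$ with $\Delta_q>0$) is the honest one, but it should be stated as an explicit strengthening of $\mathbf{(A_3)}$ rather than as something ``tacitly covered'', because it is not implied by the hypothesis as written. Apart from this, the proposal is sound.
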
\vspace*{1pt}
\begin{proof}
The proof of this lemma is similar in spirit to that of Lemma 2.5 in \cite{kiouach2020new}. Hereby, it is omitted here. 
\end{proof}\vspace*{1pt}
\begin{lemm}\label{lem2}
Let $\Psi(t)$ and  $\widehat{\Psi}(t)$ be, in this order, the solutions of systems \eqref{aux1} and \eqref{aux2} that start respectively from the  given initial values  $\Psi(0)>0$ and $\widehat{\Psi}(0)>0$. Assume that the conditions $\mathbf{(A_3)}$ and $\mathbf{(A_4)}$ hold. Then,\vspace*{5pt} 
\begin{enumerate}[label=$(\textup{\alph*})$]
\begin{minipage}[h]{0.4\linewidth}
\item $\displaystyle{\Upsilon:=2\mu-\sigma_1^2-\int_{\mathbb{U}}\xi_1^2(u)\nu(\textup{d}u)>0}~~a.s.$\label{lem2a}
\item $\lim\limits_{t\to\infty}\dfrac{\int_0^t\Psi(s)\:\textup{d}s}{t}=\dfrac{\Lambda}{\mu}~~a.s.$\label{lem2b}
\item $\lim\limits_{t\to\infty}\dfrac{\int_0^t\Psi^2(s)\:\textup{d}s}{t}=\dfrac{2\Lambda^2}{\mu\Upsilon}~~a.s.$\label{lem2c}
\end{minipage}
\begin{minipage}[h]{0.4\linewidth}
\item $\displaystyle{\widehat{\Upsilon}:=2\hat{\mu}-\sigma_3^2-\int_{\mathbb{U}}\xi_3^2(u)\nu(\textup{d}u)}>0~~a.s.$\label{lem2d}
\item $\lim\limits_{t\to\infty}\dfrac{\int_0^t \widehat{\Psi}(s)\:\textup{d}s}{t}=\dfrac{\hat{\Lambda}}{\hat{\mu}}~~a.s.$\label{lem2e}
\item $\lim\limits_{t\to\infty}\dfrac{\int_0^t \widehat{\Psi}^2(s)\:\textup{d}s}{t}=\dfrac{2\hat{\Lambda}^2}{\hat{\mu}\widehat{\Upsilon}}~~a.s.$\label{lem2f}
\end{minipage}
\end{enumerate}
\end{lemm}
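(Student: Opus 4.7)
The plan is to establish parts (a)--(c) for the auxiliary process $\Psi$ solving \eqref{aux1} and then obtain (d)--(f) by a word-for-word repetition on \eqref{aux2} with the substitutions $(\Lambda,\mu,\sigma_1,\xi_1)\leftrightarrow(\hat{\Lambda},\hat{\mu},\sigma_3,\xi_3)$, Lemma~\ref{lem}(b),(d),(f) replacing (a),(c),(e). Item (a) is purely deterministic, and the heart of the matter is to extract $\Upsilon>0$ from $(A_3)$ via a convexity argument in the exponent. Since $\partial_p^2(1+x)^p=(1+x)^p\ln^2(1+x)\geqslant 0$, both $\widetilde{\mathlarger{\mathlarger{\theta}}}_p(u)$ and $\utilde{\mathlarger{\mathlarger{\theta}}}_p(u)$ are convex in $p$ and both vanish at $p=1$, so $\mathlarger{\mathlarger{\theta}}_p(u)$ is convex in $p$ with $\mathlarger{\mathlarger{\theta}}_1\equiv 0$. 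Writing $2=\tfrac{1}{p-1}\,p+\tfrac{p-2}{p-1}\,1$, convexity gives $\mathlarger{\mathlarger{\theta}}_2(u)\leqslant \mathlarger{\mathlarger{\theta}}_p(u)/(p-1)$, so $\varrho_2/2\leqslant \varrho_p/p$ (using $p\geqslant 2$); combined with $\Sigma/2\leqslant (p-1)\Sigma/2$ this yields $\Delta_2\geqslant \Delta_p>0$. The pointwise estimates $\sigma_1^2\leqslant \Sigma$, $\xi_1^2(u)\leqslant \mathlarger{\mathlarger{\theta}}_2(u)$, and $\mu\geqslant \mu\wedge\hat{\mu}$ then give $\Upsilon/2\geqslant \Delta_2>0$, proving (a).

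For (b), integrate \eqref{aux1} on $[0,t]$, divide by $t$ and isolate the running average:
\begin{equation*}
\frac{1}{t}\int_0^t\Psi(s)\,\text{d}s=\frac{\Lambda}{\mu}-\frac{\Psi(t)-\Psi(0)}{\mu t}+\frac{\sigma_1}{\mu t}\int_0^t\Psi(s)\,\text{d}\mathcal{B}_1(s)+\frac{1}{\mu t}\int_0^t\int_{\mathbb{U}}\xi_1(u)\Psi(s^-)\widetilde{\mathcal{N}}(\text{d}s,\text{d}u).
\end{equation*}
The boundary term vanishes a.s.\ by Lemma~\ref{lem}(a) and the two stochastic integrals vanish a.s.\ by Lemma~\ref{lem}(c) and (e), giving (b). For (c), Itô's formula applied to $\Psi^2$ yields the generator $\mathcal{L}\Psi^2=2\Lambda\Psi-\Upsilon\Psi^2$: the jump compensator contributes $\Psi^2\int_{\mathbb{U}}\xi_1^2(u)\,\nu(\text{d}u)$ through the identity $(1+\xi_1)^2-1-2\xi_1=\xi_1^2$, and this combines with the $\sigma_1^2\Psi^2$ coming from the continuous quadratic variation and the $-2\mu\Psi^2$ coming from the drift to form exactly $-\Upsilon\Psi^2$. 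Integrating, dividing by $t$ and letting $t\to\infty$, the boundary term $\Psi^2(t)/t$ and both martingale terms vanish by Lemma~\ref{lem}(a),(c),(e), while $\tfrac{1}{t}\int_0^t\Psi(s)\,\text{d}s\to \Lambda/\mu$ by (b); the surviving identity $0=2\Lambda^2/\mu-\Upsilon\lim_{t\to\infty}\tfrac{1}{t}\int_0^t\Psi^2(s)\,\text{d}s$ yields the claimed value $2\Lambda^2/(\mu\Upsilon)$ upon division by $\Upsilon>0$.

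The main obstacle is precisely this first step: without $\Upsilon>0$ the division at the end of (c) is unjustified, and all subsequent asymptotic theorems in the paper rely on this positivity. The convexity-in-$p$ argument above is the conceptual bridge between the $L^p$-integrability exponent $\Delta_p>0$ postulated in $(A_3)$ and the weaker $L^2$-variance coefficient $\Upsilon$ that naturally emerges through Itô's formula on $\Psi^2$. Once (a) is secured, parts (d)--(f) are obtained by replacing $(\Psi,\Lambda,\mu,\sigma_1,\xi_1,\Upsilon)$ with $(\widehat{\Psi},\hat{\Lambda},\hat{\mu},\sigma_3,\xi_3,\widehat{\Upsilon})$ throughout, the positivity $\widehat{\Upsilon}>0$ following from the same chain of inequalities applied to $\sigma_3$ and $\xi_3$.
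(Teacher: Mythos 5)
Your proof is correct, and parts (b), (c) and the transcription to (d)--(f) follow the paper's own argument essentially verbatim: integrate the equation (respectively, It\^{o}'s formula for $\Psi^2$), divide by $t$, and kill the boundary and martingale terms with Lemma \ref{lem}. Where you genuinely diverge is item (a). The paper does not deduce $\Upsilon>0$ from $\mathbf{(A_3)}$ at all: it first rules out $\Upsilon=0$ by contradiction (otherwise letting $t\to\infty$ in the integrated identity for $\Psi^2$ would force $2\Lambda^2/\mu=0$), then divides by $\Upsilon$, obtains the limit $2\Lambda^2/(\mu\Upsilon)$ for the nonnegative time average $\frac{1}{t}\int_0^t\Psi^2(s)\,\mathrm{d}s$, and only then concludes $\Upsilon>0$ a posteriori from the sign of that limit; items (a) and (c) are thus proved simultaneously. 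Your route is a direct, quantitative derivation: convexity of $p\mapsto\theta_p(u)$ with $\theta_1\equiv 0$ gives $\theta_2(u)\leqslant\theta_p(u)/(p-1)$, hence $\varrho_2/2\leqslant\varrho_p/p$ for $p\geqslant 2$ and $\Delta_2\geqslant\Delta_p>0$, and the pointwise bounds $\sigma_1^2\leqslant\Sigma$ and $\xi_1^2(u)\leqslant\theta_2(u)$ yield $\Upsilon\geqslant 2\Delta_2>0$. This is sound (it implicitly uses $\xi_i(u)>-1$ from $\mathbf{(A_2)}$ so that $(1+\xi)^p$ is convex in $p$, but $\mathbf{(A_2)}$ is in force throughout the paper) and arguably preferable, since it exposes the explicit bound $\Upsilon\geqslant 2\Delta_p$ linking $\mathbf{(A_3)}$ to the denominator in (c), whereas the paper only gets positivity indirectly and needs the separate contradiction step to license the division by $\Upsilon$. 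The one cosmetic caveat in your write-up of (c) is that you should solve for $\frac{1}{t}\int_0^t\Psi^2(s)\,\mathrm{d}s$ before passing to the limit (as the paper does) rather than asserting the existence of its limit up front; with $\Upsilon>0$ already in hand this rearrangement is immediate.
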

\begin{proof}
Let $\Psi(0)$ be a positive initial datum, by integrating the both sides of equation \eqref{aux1} from $0$ to $t$, and then dividing by $t$, we get
\begin{align*}
 \frac{\Psi(t)-\Psi(0)}{t}=\Lambda-\mu\times\frac{\int^t_0\Psi(s)\:\textup{d}s}{t}+\sigma_1\times\frac{\int_0^t \Psi(s)\:\textup{d}\mathcal{B}_1(s)}{t} +\dfrac{\int^t_0\int_\mathbb{U} \xi_1(u)\Psi(s^{-})\widetilde{\mathcal{N}}(\textup{d}s,\textup{d}u)}{t}.
\end{align*} 
Therefore
\begin{align*}
\frac{\int^t_0\Psi(s)\:\textup{d}s}{t} =\dfrac{\Lambda}{\mu}-\frac{\Psi(t)-\Psi(0)}{\mu t}+\sigma_1\times\frac{\int_0^t \Psi(s)\:\textup{d}\mathcal{B}_1(s)}{\mu t} +\dfrac{\int^t_0\int_\mathbb{U} \xi_1(u)\Psi(s^{-})\:\widetilde{\mathcal{N}}(\textup{d}s,\textup{d}u)}{\mu t}.
\end{align*} 
Letting $t$ go to infinity in the last equality and then using  Lemma \ref{lem} yields
\begin{equation}\label{1.5}
\lim\limits_{t\to\infty}\dfrac{\int_0^t\Psi(s)\:\textup{d}s}{t}=\dfrac{\Lambda}{\mu}~~a.s.
\end{equation}
Hence, the item \ref{lem2b} of the lemma is proved. We are now in a position to show the statements \ref{lem2a} and \ref{lem2c}. By applying the generalized It\^{o}'s formula \big(see \cite{oksendal2007applied}\big) to $\Psi^2$ we obtain \vspace{3pt}
\begin{align}\label{2}
\textup{d}\Psi^2(t)&=\bigg(2\Psi(t)\Big(\Lambda-\mu \Psi(t)\Big)+\sigma_1^2\Psi^2(t)+\int_{\mathbb{U}}\Psi^2(t)\Big((1+\xi_1(u))^2-1-2\xi_1(u)\Big)\nu(\textup{d}u)\bigg)\textup{d}t+2\sigma_1 \Psi^2(t)\:\textup{d} \mathcal{B}_1(t)\nonumber\\
&\;\;\;+\int_{\mathbb{U}} \Psi^2(t^{-})\Big((1+\xi_1(u))^2-1\Big)\:\widetilde{\mathcal{N}}(\textup{d}t,\textup{d}u).
\end{align}
Integrating  both sides of \eqref{2}  from $0$ to $t$ and then dividing by $t$ gives
\begin{align*}
\dfrac{\Psi^2(t)-\Psi^2(0)}{t}&= 2\Lambda\times\dfrac{\int^t_0\Psi(s)\textup{d}s}{t}-\bigg(2\mu-\sigma_1^2-\int_{\mathbb{U}}\xi_1^2(u)\nu(\textup{d}u)\bigg)\times\dfrac{\int^t_0\Psi^2(s)\textup{d}s}{t}+2\sigma_1\times\dfrac{\int^t_0\Psi^2(s)\textup{d} \mathcal{B}_1(s)}{t}
\\&\;\;\;+\dfrac{\int^t_0\int_\mathbb{U}\Psi^2(s^-)\Big((1+\xi_1(u))^2-1\Big)\widetilde{\mathcal{N}}(\textup{d}s,\textup{d}u)}{t}.
\end{align*}
So
\begin{align}\label{3}
\bigg(2\mu-\sigma_1^2-\int_{\mathbb{U}}\xi_1^2(u)\nu(\textup{d}u)\bigg)\times\dfrac{\int^t_0\Psi^2(s)\textup{d}s}{t}&=\dfrac{\Psi^2(0)-\Psi^2(t)}{t}+2\Lambda\times\dfrac{\int^t_0\Psi(s)\textup{d}s}{t}+2\sigma_1\times\dfrac{\int^t_0\Psi^2(s)\textup{d} \mathcal{B}_1(s)}{t}\nonumber\\
&\;\;\;+\dfrac{\int^t_0\int_\mathbb{U}\Psi^2(s^-)\Big((1+\xi_1(u))^2-1\Big)\widetilde{\mathcal{N}}(\textup{d}s,\textup{d}u)}{t}.
\end{align}
Clearly, the constant $\displaystyle{\Upsilon:=2\mu-\sigma_1^2-\int_{\mathbb{U}}\xi_1^2(u)\nu(\text{d}u)}$ is not zero, since if it is not the case, we will tend $t$ to infinity and come across the contradictory equality $\frac{2\Lambda^2}{\mu}=0$. For this reason, one can easily divide the both sides of \eqref{3} by $\Upsilon$  and deduce that
\begin{align*}
\dfrac{\int^t_0\Psi^2(s)\textup{d}s}{t}&=\dfrac{\Psi^2(0)-\Psi^2(t)}{\Upsilon t}+2\Lambda\times\dfrac{\int^t_0\Psi(s)\textup{d}s}{\Upsilon t}+2\sigma_1\times\dfrac{\int^t_0\Psi^2(s)\textup{d} \mathcal{B}_1(s)}{\Upsilon t}+\dfrac{\int^t_0\int_\mathbb{U}\Psi^2(s^-)\Big((1+\xi_1(u))^2-1\Big)\widetilde{\mathcal{N}}(\textup{d}s,\textup{d}u)}{\Upsilon t}.
\end{align*}
Using Lemma \ref{lem} together with \eqref{1.5} implies that
$$\lim\limits_{t\to\infty}\dfrac{\int_0^t\Psi^2(s)\:\textup{d}s}{t}=\dfrac{2\Lambda^2}{\mu\Upsilon}~~a.s.$$
and by observing the positivity of $\dfrac{\int_0^t\Psi^2(s)\:\textup{d}s}{t}$, we can conclude at the same time that $\Upsilon>0$. Hence, the claims \ref{lem2a} and \ref{lem2c} of the lemma are proved. 
By an analogous argument, the assertions \ref{lem2c}, \ref{lem2d} and \ref{lem2e} can be drawn, and this finishes the proof.     
\end{proof}
\begin{rema}
By making use of the famous stochastic comparison theorem (see \cite{peng2006necessary}), we can easily assert ,and for almost all $w\in \Omega$, that
\begin{align}\label{3.25}
\begin{cases}
S(t,w)\leqslant \Psi(t,w)\\
\widehat{S}(t,w)\leqslant \widehat{\Psi}(t,w)
\end{cases} ~~\text{for all $t>0$.}
\end{align}     
\end{rema}
\begin{rema}
In the l\'{e}vy jumps case, the previous lemma can be seen as an alternative method to overcome the inexistence of an explicit expression for the stationary distribution of \eqref{aux1} and \eqref{aux2}. This problem remains an open question until now, and we can find in the literature  several works (see for example \cite{zhao2018sharp} and \cite{zhao2019stochastic}) that present the threshold analysis of their epidemiological model with a formulation incorporating an unknown stationary distribution. 
\end{rema}
\begin{defi}[Ordinary stochastic extinction \cite{kiouach2021advanced}]
For system \eqref{sto1} the infected individuals $I(t)$ and $\widehat{I}(t)$  are said to be stochastically extinct if $\lim\limits_{t\to\infty} \left(\widehat{I}(t)+I(t)\right)=0~~a.s.$
\end{defi} 
\begin{defi}[Exponential stochastic extinction \cite{kiouach2021longtime}]
The infected individuals $I(t)$ and $\widehat{I}(t)$  appearing in the system  \eqref{sto1} are called exponentially stochastically extinctive if  $$\limsup_{t\to\infty}\dfrac{1}{t}\ln\left(I(t)+\widehat{I}(t)\right)<0~~a.s.$$  
\end{defi} 
\begin{rema}
It is easily seen that the stochastic exponential extinction implies the ordinary stochastical one (see \cite{ji2014threshold}), but the converse is not true in general.
\end{rema}

In order to simplify the writing of the next theorem, we introduce these conventions:\vspace*{10pt}
\begin{itemize}
\item[$\bullet$] For all $x\in\mathbb{R}$,  $ x^+:=\max\{0,x\}=\dfrac{x+\left|x\right|}{2}$ \big(this function is commonly known as the ramp function, see \cite{nair2011advanced}\big). \vspace*{3pt}
\item[$\bullet$] $\underline{\mathfrak{b}}(u):=\big(\xi_2(u)\wedge\xi_4(u)-\ln\left(1+\xi_2(u)\wedge\xi_4(u)\right)\big)\times \mathds{1}_{\left\{\xi_2(u)\wedge\xi_4(u)>0\right\}}.$ \vspace*{4pt}
\item[$\bullet$] $\overline{\mathfrak{b}}(u):=\big(\xi_2(u)\vee\xi_4(u)-\ln\left(1+\xi_2(u)\vee\xi_4(u)\right)\big)\times \mathds{1}_{\left\{\xi_2(u)\vee\xi_4(u)\leqslant 0\right\}}.$ \vspace*{3pt}
\item[$\bullet$] $\mathfrak{B}:=\displaystyle{\int_{\mathbb{U}}\Big(\overline{\mathfrak{b}}(u)+\underline{\mathfrak{b}}(u)\Big)\nu(\textup{d}u)}.$ \vspace*{3pt}
\item[$\bullet$] $\mathfrak{C}:=\dfrac{(\sigma_2\sigma_4)^2}{2(\sigma_2^2+\sigma_4^2)}.$ \vspace*{3pt}
\item[$\bullet$] $\mathfrak{D}:=\left[(\mu+\rho)\vee\hat{\mu}\right]\times \left(\sqrt{\mathcal{R}_0}-1\right)^+-\left[(\mu+\rho)\wedge\hat{\mu}\right]\times \left( 1-\sqrt{\mathcal{R}_0}\right)^+.$
\end{itemize}
\vspace*{10pt}
\begin{theo} \label{th}
Let $\big(\hspace{-0.5pt}S_0,\hspace{-0.5pt}I_0,\hspace{-0.5pt}\widehat{S}_0,\hspace{-0.5pt}\widehat{I}_0\big)\in\mathbb{R}_+^4$, and let $\big(\hspace{-0.5pt}S(t),\hspace{-0.5pt}I(t),\hspace{-0.5pt}\widehat{S}(t),\hspace{-0.5pt}\widehat{I}(t)\big)$ denote the solution of \eqref{sto1} that satisfies the \\[3pt] initial condition $\big(\hspace{-0.5pt}S(0),\hspace{-0.5pt}I(0),\hspace{-0.5pt}\widehat{S}(0),\hspace{-0.5pt}\widehat{I}(0)\big)=\big(\hspace{-0.5pt}S_0,\hspace{-0.5pt}I_0,\hspace{-0.5pt}\widehat{S}_0,\hspace{-0.5pt}\widehat{I}_0\big)$. If hypotheses $\mathbf{(A_1)}-\mathbf{(A_4)}$ hold, and if moreover we have \vspace*{3pt}
\begin{itemize}
\item[$\bullet$]$\mathbf{(A_5):}$ $\mathfrak{M}_4:=\displaystyle{\max_{1\leqslant i \leqslant 4}\left(\int_{\mathbb{U}}\Big(\ln\big(1+\xi_i(u)\big)\Big)^2\nu(\textup{d}u)\right)<\infty}$. 
\end{itemize}\vspace*{3pt}
Then
$$\limsup_{t\to\infty}\dfrac{1}{t}\ln\left(\dfrac{b\hat{\beta}\hat{\Lambda}}{\hat{\mu}^2(\mu+\rho)}I(t)+\frac{\sqrt{\mathcal{R}_0}}{\hat{\mu}}\widehat{I}(t)\right)\leqslant \mathlarger{\mathlarger{\kappa}}~~a.s,$$
where $$\mathlarger{\mathlarger{\kappa}}:=\mathfrak{D}-\mathfrak{C}-\mathfrak{B}+\dfrac{\hat{\mu}\sqrt{\mathcal{R}_0}}{2}\left(\dfrac{2\mu}{\Upsilon}-1\right)^{\frac{1}{2}}+\dfrac{(\mu+\rho)\sqrt{\mathcal{R}_0}}{2}\left(\dfrac{2\hat{\mu}}{\widehat{\Upsilon}}-1\right)^{\frac{1}{2}}.$$ In particular, if the condition $\mathlarger{\mathlarger{\mathlarger{\kappa}}}<0$ is verified, then the disease will die out exponentially almost surely.
\end{theo}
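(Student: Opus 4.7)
The plan is to define the Lyapunov functional
\[
V(t):=c_1\,I(t)+c_2\,\widehat{I}(t),\qquad c_1:=\frac{b\hat{\beta}\hat{\Lambda}}{\hat{\mu}^2(\mu+\rho)},\quad c_2:=\frac{\sqrt{\mathcal{R}_0}}{\hat{\mu}},
\]
so that $V(t)$ is precisely the quantity whose logarithm appears inside the $\limsup$ in the statement, and then to show $\limsup_{t\to\infty}t^{-1}\ln V(t)\leqslant\mathlarger{\mathlarger{\kappa}}$ almost surely. These weights are chosen to satisfy the two algebraic identities $c_1 b\beta\,\tfrac{\Lambda}{\mu}=c_2\hat{\mu}\sqrt{\mathcal{R}_0}$ and $c_2 b\hat{\beta}\,\tfrac{\hat{\Lambda}}{\hat{\mu}}=c_1(\mu+\rho)\sqrt{\mathcal{R}_0}$, which are exactly what makes the cross-infection terms resynthesize the clean factor $(\sqrt{\mathcal{R}_0}-1)$ once $S,\widehat{S}$ are replaced by the dominating processes $\Psi,\widehat{\Psi}$ from \eqref{aux1}-\eqref{aux2}.

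Apply the It\^{o} formula for jump-diffusions to $\ln V$. Using the comparison inequalities $S\leqslant\Psi$, $\widehat{S}\leqslant\widehat{\Psi}$ (remark following Lemma \ref{lem2}) together with the decompositions $\Psi=\tfrac{\Lambda}{\mu}+(\Psi-\tfrac{\Lambda}{\mu})$ and $\widehat{\Psi}=\tfrac{\hat{\Lambda}}{\hat{\mu}}+(\widehat{\Psi}-\tfrac{\hat{\Lambda}}{\hat{\mu}})$, the continuous drift is bounded above by
\[
(\sqrt{\mathcal{R}_0}-1)\,\frac{c_1(\mu+\rho)I+c_2\hat{\mu}\widehat{I}}{V}\;+\;\frac{c_1 b\beta(\Psi-\tfrac{\Lambda}{\mu})\widehat{I}+c_2 b\hat{\beta}(\widehat{\Psi}-\tfrac{\hat{\Lambda}}{\hat{\mu}})I}{V}.
\]
The first summand, being $(\sqrt{\mathcal{R}_0}-1)$ times a convex combination of $(\mu+\rho)$ and $\hat{\mu}$, is at most $\mathfrak{D}$ whatever the sign of $\sqrt{\mathcal{R}_0}-1$; the second is an error term postponed below. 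The Brownian It\^{o} correction $-[(c_1\sigma_2 I)^2+(c_2\sigma_4\widehat{I})^2]/(2V^2)$ is at most $-\mathfrak{C}$ by the Cauchy–Schwarz inequality $V^2\leqslant(\sigma_2^{-2}+\sigma_4^{-2})[(c_1\sigma_2 I)^2+(c_2\sigma_4\widehat{I})^2]$. For the jump compensator, setting $\lambda(s^-):=c_1 I(s^-)/V(s^-)\in[0,1]$, its integrand equals $\ln(1+y)-y$ at $y=\lambda\xi_2(u)+(1-\lambda)\xi_4(u)\in(-1,\infty)$; exploiting that $f(y):=y-\ln(1+y)$ is strictly decreasing on $(-1,0)$ and strictly increasing on $[0,\infty)$, one checks case by case (both $\xi_2(u),\xi_4(u)>0$; both $\leqslant 0$; mixed signs) the uniform estimate $\ln(1+y)-y\leqslant-\underline{\mathfrak{b}}(u)-\overline{\mathfrak{b}}(u)$, so integration against $\nu$ bounds the compensator by $-\mathfrak{B}$.

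Integrating from $0$ to $t$ and dividing by $t$, all martingale pieces vanish a.s.\ via Lemma \ref{lem}\,(c)-(h), since the integrands $c_i I/V,\ c_i\widehat{I}/V$ are bounded by $1$ and $\mathbf{(A_1)},\mathbf{(A_5)}$ furnish the required jump moments. The delicate step, and the main obstacle, is the control of the two error terms $E_1:=c_1 b\beta(\Psi-\tfrac{\Lambda}{\mu})\widehat{I}/V$ and $E_2:=c_2 b\hat{\beta}(\widehat{\Psi}-\tfrac{\hat{\Lambda}}{\hat{\mu}})I/V$. Using $\widehat{I}/V\leqslant 1/c_2$ together with Cauchy–Schwarz and Lemma \ref{lem2}\,\ref{lem2b}-\ref{lem2c} gives the naive bound $\limsup_{t\to\infty}\tfrac{1}{t}\int_0^t|\Psi-\tfrac{\Lambda}{\mu}|\,\text{d}s\leqslant\tfrac{\Lambda}{\mu}\sqrt{2\mu/\Upsilon-1}$, which overshoots the theorem by a factor of $2$. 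The halving is recovered by the identity
\[
(\Psi-\tfrac{\Lambda}{\mu})^+=\tfrac{1}{2}\bigl[(\Psi-\tfrac{\Lambda}{\mu})+|\Psi-\tfrac{\Lambda}{\mu}|\bigr],
\]
combined with the fact that direct integration of \eqref{aux1} and Lemma \ref{lem} yield $\tfrac{1}{t}\int_0^t(\Psi-\tfrac{\Lambda}{\mu})\,\text{d}s\to 0$ a.s., whence $\limsup\tfrac{1}{t}\int_0^t(\Psi-\tfrac{\Lambda}{\mu})^+\,\text{d}s\leqslant\tfrac{1}{2}\tfrac{\Lambda}{\mu}\sqrt{2\mu/\Upsilon-1}$. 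Bounding $E_1$ above by $(c_1 b\beta/c_2)(\Psi-\tfrac{\Lambda}{\mu})^+$ (which dominates the integrand pointwise, as $\widehat{I}/V\leqslant 1/c_2$ and the negative part of $\Psi-\Lambda/\mu$ contributes non-positively) and applying the symmetric argument to $E_2$ via \eqref{aux2} and Lemma \ref{lem2}\,\ref{lem2e}-\ref{lem2f}, the two error contributions combine to $\tfrac{\hat{\mu}\sqrt{\mathcal{R}_0}}{2}\sqrt{2\mu/\Upsilon-1}+\tfrac{(\mu+\rho)\sqrt{\mathcal{R}_0}}{2}\sqrt{2\hat{\mu}/\widehat{\Upsilon}-1}$. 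Summing the four estimates gives $\limsup_{t\to\infty}t^{-1}\ln V(t)\leqslant\mathlarger{\mathlarger{\kappa}}$ a.s., and exponential stochastic extinction of $(I(t),\widehat{I}(t))$ under $\mathlarger{\mathlarger{\kappa}}<0$ follows at once.
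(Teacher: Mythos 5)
Your proposal is correct and follows essentially the same route as the paper: the same weighted sum $\lambda_1 I+\lambda_2\widehat{I}$ inside the logarithm, the Cauchy--Schwarz bound yielding $-\mathfrak{C}$, the monotonicity of $y\mapsto y-\ln(1+y)$ yielding $-\mathfrak{B}$, the comparison with $\Psi,\widehat{\Psi}$ followed by the ramp-function identity $x^{+}=\tfrac{1}{2}(x+|x|)$ and H\"{o}lder's inequality together with Lemma \ref{lem2} to recover the halved error terms, and the strong law of large numbers for local martingales to dispose of the noise. The only cosmetic difference is that you attribute the vanishing of the martingale averages to Lemma \ref{lem} (c)--(h) rather than invoking the strong law directly as the paper does, but the underlying justification is identical.
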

\begin{proof}
First of all, let us define a $\mathcal{C}^2$ function $W:\mathbb{R}_+^2\to \mathbb{R}$ by $$W(x_1,x_2)=\ln\Bigg(\overbrace{\dfrac{b\hat{\beta}\hat{\Lambda}}{\hat{\mu}^2(\mu+\rho)}}^{:=\lambda_1} x_1+\overbrace{\dfrac{\sqrt{\mathcal{R}_0}}{\hat{\mu}}}^{:=\lambda_2}x_2\Bigg).$$
Applying the It\^{o}'s formula to $W\big(I(t),\widehat{I}(t)\big)$ shows that for all $t\geqslant 0$ we have
\begin{align*}
 \textup{d}W\big(I(t),\widehat{I}(t)\big)&= \mathcal{L}W\big(I(t),\widehat{I}(t)\big)\textup{d}t+\dfrac{\left(\lambda_1\sigma_2I(t)\text{d}\mathcal{B}_2(t)+\lambda_2\sigma_4\widehat{I}(t)\text{d}\mathcal{B}_4(t)\right)}{\lambda_1I(t)+\lambda_2\widehat{I}(t)}\\
 &\quad+\mathlarger{\mathlarger{\int_{\mathbb{U}}}}\ln\left(1+\frac{\lambda_1\xi_2(u)I(t)+\lambda_2\xi_4(u)\widehat{I}(t)}{\lambda_1 I(t)+\lambda_2\widehat{I}(t)}\right)\widetilde{\mathcal{N}}(\textup{d}t,\textup{d}u),
\end{align*}
where 
\begin{align}\label{3.5}
\hspace*{-1cm}\mathcal{L}W\big(I(t),\widehat{I}(t)\big)&=\dfrac{\lambda_1\left(b \beta S(t) \widehat{I}(t)-\left(\mu+\rho_0+\rho_1\right)I(t)\right)+\lambda_2\left(b\hat{\beta} \widehat{S}(t)I(t)-\hat{\mu}\widehat{I}(t)\right)}{\lambda_1I(t)+\lambda_2\widehat{I}(t)}-\dfrac{\left(\lambda_1^2\sigma_2^2(I(t))^2+\lambda_2^2\sigma_4^2(\widehat{I}(t))^2\right)}{2\left(\lambda_1I(t)+\lambda_2\widehat{I}(t)\right)^2}\nonumber\\
 &\quad+\mathlarger{\mathlarger{\int_{\mathbb{U}}}}\Bigg(\ln\left(1+\frac{\lambda_1\xi_2(u)I(t)+\lambda_2\xi_4(u)\widehat{I}(t)}{\lambda_1 I(t)+\lambda_2\widehat{I}(t)}\right)-\frac{\lambda_1\xi_2(u)I(t)+\lambda_2\xi_4(u)\widehat{I}(t)}{\lambda_1 I(t)+\lambda_2\widehat{I}(t)}\Bigg)~\nu(\textup{d}u).
\end{align}
By virtue of the well-known Cauchy-Schwartz inequality  (see for example \cite{yin2017new}), we have \vspace*{7pt}
\begin{equation*}
\left(\lambda_1I(t)+\lambda_2\widehat{I}(t)\right)^2=\left(\dfrac{1}{\sigma_2}\lambda_1\sigma_2I(t)+\dfrac{1}{\sigma_4}\lambda_2\sigma_4\widehat{I}(t)\right)^2\leqslant\left(\dfrac{1}{\sigma_2^2}+\dfrac{1}{\sigma_4^2}\right)\left(\lambda_1^2\sigma_2^2(I(t))^2+\lambda_2^2\sigma_4^2(\widehat{I}(t))^2\right).
\end{equation*} \vspace*{5pt}
So
\vspace*{5pt}
\begin{equation}\label{4}
-\dfrac{1}{\left(\lambda_1 I(t)+\lambda_2\widehat{I}(t)\right)^2}\left(\lambda_1^2\sigma_2^2(I(t))^2+\lambda_2^2\sigma_4^2(\widehat{I}(t))^2\right)\leqslant -\dfrac{(\sigma_2\sigma_4)^2}{\sigma_2^2+\sigma_4^2}.
\end{equation}
\vspace*{5pt}
At the same time, it follows from the monotonicity of the function $a\mapsto \ln(1+a)-a$ on $\left(-1,+\infty\right)$ \big(it is increasing over $\left(-1,0\right)$ and decreasing over $\left[0,+\infty\right)$\big) that\vspace*{5pt}
\begin{equation}\label{5}
\mathlarger{\mathlarger{\int_{\mathbb{U}}}}\Bigg(\ln\left(1+\frac{\lambda_1\xi_2(u)I(t)+\lambda_2\xi_4(u)\widehat{I}(t)}{\lambda_1 I(t)+\lambda_2\widehat{I}(t)}\right)-\frac{\lambda_1\xi_2(u)I(t)+\lambda_2\xi_4(u)\widehat{I}(t)}{\lambda_1 I(t)+\lambda_2\widehat{I}(t)}\Bigg)~\nu(\textup{d}u)\leqslant -\mathfrak{B}.
\end{equation}\vspace*{5pt}
Combining \eqref{4}, \eqref{5} and \eqref{3.25} with \eqref{3.5} yields\vspace*{5pt}
\begin{align}\label{5.5}
\mathcal{L}W\big(I(t),\widehat{I}(t)\big)&\leqslant\dfrac{\lambda_1\left(b \beta \Psi(t) \widehat{I}(t)-\left(\mu+\rho_0+\rho_1\right)I(t)\right)+\lambda_2\left(b\hat{\beta} \widehat{\Psi}(t)I(t)-\hat{\mu}\widehat{I}(t)\right)}{\lambda_1I(t)+\lambda_2\widehat{I}(t)}-\mathfrak{C}-\mathfrak{B}\nonumber\\
&\leqslant\dfrac{\lambda_1\left(b \beta \dfrac{\Lambda}{\mu} \widehat{I}(t)-\left(\mu+\rho_0+\rho_1\right)I(t)\right)+\lambda_2\left(b\hat{\beta} \dfrac{\hat{\Lambda}}{\hat{\mu}}I(t)-\hat{\mu}\widehat{I}(t)\right)}{\lambda_1I(t)+\lambda_2\widehat{I}(t)}-\mathfrak{C}-\mathfrak{B}\nonumber\\
&\quad+\dfrac{\lambda_1 b \beta  \widehat{I}(t)}{\lambda_1I(t)+\lambda_2\widehat{I}(t)}\left(\Psi(t)-\dfrac{\Lambda}{\mu}\right)+\dfrac{\lambda_2 b \hat{\beta}I(t)}{\lambda_1I(t)+\lambda_2\widehat{I}(t)}\left(\widehat{\Psi}(t)-\dfrac{\hat{\Lambda}}{\hat{\mu}}\right)\nonumber\\
&\leqslant \dfrac{\Big(\lambda_1 b \beta \dfrac{\Lambda}{\mu}-\lambda_2\hat{\mu}\Big) \widehat{I}(t)+\left(\lambda_2 b\hat{\beta} \dfrac{\hat{\Lambda}}{\hat{\mu}}-\lambda_1\left(\mu+\rho_0+\rho_1\right)\right)I(t)}{\lambda_1I(t)+\lambda_2\widehat{I}(t)}-\mathfrak{C}-\mathfrak{B}\nonumber\\
&\quad+\dfrac{\lambda_1 b \beta  \widehat{I}(t)}{\lambda_1I(t)+\lambda_2\widehat{I}(t)}\left(\Psi(t)-\dfrac{\Lambda}{\mu}\right)^++\dfrac{\lambda_2 b \hat{\beta} I(t)}{\lambda_1I(t)+\lambda_2 \widehat{I}(t)}\left(\widehat{\Psi}(t)-\dfrac{\hat{\Lambda}}{\hat{\mu}}\right)^+.
\end{align}\vspace*{5pt}
Since $\mathcal{R}_0=\lambda_1 b \beta \dfrac{\Lambda}{\mu}$, $\sqrt{\mathcal{R}_0}=\lambda_2\hat{\mu}$ and $\lambda_1(\mu+\rho)=b\hat{\beta} \dfrac{\hat{\Lambda}}{\hat{\mu}^2}$,  we get\vspace*{5pt}
\begin{align*}
\mathcal{L}W\big(I(t),\widehat{I}(t)\big)&\leqslant\dfrac{\left(\mathcal{R}_0-\sqrt{\mathcal{R}_0}\right) \widehat{I}(t)+\Big(\sqrt{\mathcal{R}_0}\lambda_1(\mu+\rho)-\lambda_1\left(\mu+\rho_0+\rho_1\right)\Big)I(t)}{\lambda_1I(t)+\lambda_2\widehat{I}(t)}-\mathfrak{C}-\mathfrak{B}\\
&\quad+\dfrac{\lambda_1 b \beta}{\lambda_2}\left(\Psi(t)-\dfrac{\Lambda}{\mu}\right)^++\dfrac{\lambda_2 b \hat{\beta}}{\lambda_1}\left(\widehat{\Psi}(t)-\dfrac{\hat{\Lambda}}{\hat{\mu}}\right)^+\\
&\leqslant\dfrac{\left(\sqrt{\mathcal{R}_0}-1\right)\left(\lambda_1\left(\mu+\rho_0+\rho_1\right)I(t)+\lambda_2\hat{\mu}\widehat{I}(t)\right)}{\lambda_1I(t)+\lambda_2\widehat{I}(t)}-\mathfrak{C}-\mathfrak{B}+\dfrac{\lambda_1 b \beta}{\lambda_2}\left(\Psi(t)-\dfrac{\Lambda}{\mu}\right)^+\\
&\quad+\dfrac{\lambda_2 b \hat{\beta}}{\lambda_1}\left(\widehat{\Psi}(t)-\dfrac{\hat{\Lambda}}{\hat{\mu}}\right)^+\\
&\leqslant \mathfrak{D}-\mathfrak{C}-\mathfrak{B}+\dfrac{\lambda_1 b \beta}{\lambda_2}\left(\Psi(t)-\dfrac{\Lambda}{\mu}\right)^++\dfrac{\lambda_2 b \hat{\beta}}{\lambda_1}\left(\widehat{\Psi}(t)-\dfrac{\hat{\Lambda}}{\hat{\mu}}\right)^+.
\end{align*}\vspace*{5pt}
Hence, we obtain \vspace*{5pt}
\begin{align*}
\textup{d}W\big(I(t),\widehat{I}(t)\big) &\leqslant \left( \mathfrak{D}\hspace{-1pt}-\hspace{-1pt}\mathfrak{C}\hspace{-1pt}-\hspace{-1pt}\mathfrak{B}+\dfrac{\lambda_1 b \beta}{\lambda_2}\left(\Psi(t)-\dfrac{\Lambda}{\mu}\right)^+\hspace{-5pt}+\dfrac{\lambda_2 b \hat{\beta}}{\lambda_1}\left(\widehat{\Psi}(t)-\dfrac{\hat{\Lambda}}{\hat{\mu}}\right)^+\right)\textup{d}t+\dfrac{\left(\lambda_1\sigma_2I(t)\text{d}\mathcal{B}_2(t)+\lambda_2\sigma_4\widehat{I}(t)\text{d}\mathcal{B}_4(t)\right)}{\lambda_1I(t)+\lambda_2\widehat{I}(t)}\\
 &\quad+\int_{\mathbb{U}}\ln\big(1+\xi_2(u)\vee\xi_4(u)\big)\widetilde{\mathcal{N}}(\textup{d}t,\textup{d}u)
\end{align*}\vspace*{5pt}
 Integrating the last inequality from $0$ to $t$, and then dividing by $t$ on both sides gives\vspace*{5pt}
\begin{align}\label{6}
\dfrac{W\big(I(t),\widehat{I}(t)\big)}{t} &\leqslant \dfrac{W\big(I(0),\widehat{I}(0)\big)}{t}+ \mathfrak{D}-\mathfrak{C}-\mathfrak{B}+\dfrac{\lambda_1 b \beta}{\lambda_2 t}\mathlarger{\int_{0}^t}\left(\Psi(s)-\dfrac{\Lambda}{\mu}\right)^+\textup{d}s+\dfrac{\lambda_2 b \hat{\beta}}{\lambda_1t}\mathlarger{\int_{0}^t}\left(\widehat{\Psi}(s)-\dfrac{\hat{\Lambda}}{\hat{\mu}}\right)^+\textup{d}s\nonumber\\
&\quad+\dfrac{1}{t}\times\underbrace{\left(\mathlarger{\int_{0}^t}\dfrac{\lambda_1\sigma_2I(s)}{\lambda_1I(s)+\lambda_2\widehat{I}(s)}\text{d}\mathcal{B}_2(s)+\mathlarger{\int_{0}^t}\dfrac{\lambda_2\sigma_4\widehat{I}(s)}{\lambda_1I(s)+\lambda_2\widehat{I}(s)}\text{d}\mathcal{B}_4(s)\right)}_{:=\mathbf{M}_1(t)}\nonumber\\
 &\quad+\dfrac{1}{t}\times\underbrace{\int_{0}^t\int_{\mathbb{U}}\ln\big(1+\xi_2(u)\vee\xi_4(u)\big)\widetilde{\mathcal{N}}(\textup{d}s,\textup{d}u)}_{:=\mathbf{M}_2(t)}.
\end{align}\vspace*{5pt}
On the other hand, we can see by employing the classical H\"{o}lder's inequality that\vspace*{5pt}
\begin{align*}
\dfrac{1}{t}\mathlarger{\int_{0}^t}\left(\Psi(s)-\dfrac{\Lambda}{\mu}\right)^+\textup{d}s&=\dfrac{1}{2t}\mathlarger{\int_{0}^t}\left(\Psi(s)-\dfrac{\Lambda}{\mu}\right)~\textup{d}s+\dfrac{1}{2t}\mathlarger{\int_{0}^t}\left|\Psi(s)-\dfrac{\Lambda}{\mu}\right|~\textup{d}s\\
&\leqslant \dfrac{1}{2t}\mathlarger{\int_{0}^t}\left(\Psi(s)-\dfrac{\Lambda}{\mu}\right)~\textup{d}s+\dfrac{1}{2\sqrt{t}}\left(\mathlarger{\int_{0}^t}\left(\Psi(s)-\dfrac{\Lambda}{\mu}\right)^2\textup{d}s\right)^{\frac{1}{2}}\\
&\leqslant \dfrac{1}{2}\left(\dfrac{1}{t}\int_{0}^t\Psi(s)~\textup{d}s-\dfrac{\Lambda}{\mu}\right)+\dfrac{1}{2}\left(\dfrac{1}{t}\mathlarger{\int_{0}^t}\left(\Psi^2(s)-\dfrac{2\Lambda}{\mu}\Psi(s)+\dfrac{\Lambda^2}{\mu^2}\right)\textup{d}s\right)^{\frac{1}{2}}.
\end{align*}
This last fact together with  Lemma \ref{lem2} implies that\vspace*{5pt}
\begin{equation}\label{7}
\lim_{t\to\infty}\dfrac{1}{t}\mathlarger{\int_{0}^t}\left(\Psi(s)-\dfrac{\Lambda}{\mu}\right)^+\textup{d}s \leqslant \frac{1}{2}\left(\dfrac{2\Lambda^2}{\mu\Upsilon}-2\dfrac{\Lambda^2}{\mu^2}+\dfrac{\Lambda^2}{\mu^2}\right)^{\frac{1}{2}}= \frac{\Lambda}{2\mu}\left(\dfrac{2\mu}{\Upsilon}-1\right)^{\frac{1}{2}}~~a.s.
\end{equation}\vspace*{5pt}
By a similar argument, we can also assert that \vspace*{5pt}
\begin{equation}\label{8}
\lim_{t\to\infty}\dfrac{1}{t}\mathlarger{\int_{0}^t}\left(\widehat{\Psi}(s)-\dfrac{\hat{\Lambda}}{\hat{\mu}}\right)^+\textup{d}s \leqslant \frac{1}{2}\left(\dfrac{2\hat{\Lambda}^2}{\hat{\mu}\widehat{\Upsilon}}-2\dfrac{\hat{\Lambda}^2}{\hat{\mu}^2}+\dfrac{\hat{\Lambda}^2}{\hat{\mu}^2}\right)^{\frac{1}{2}}= \frac{\hat{\Lambda}}{2\hat{\mu}}\left(\dfrac{2\hat{\mu}}{\widehat{\Upsilon}}-1\right)^{\frac{1}{2}}~~a.s.
\end{equation}
It is fairly easy to see that $\mathbf{M}_1(t)$ is a local martingale with finite quadratic variation, and from the hypothesis  $\mathbf{(A_5)}$  we can affirm that  $\mathbf{M}_2(t)$ will be also so. Therefore, we conclude by the strong law of large numbers for local martingales that  
\begin{equation}\label{9}
\lim_{t\to \infty}\dfrac{\mathbf{M}_1(t)}{t}~~\text{and}~~\lim_{t\to \infty}\dfrac{\mathbf{M}_2(t)}{t}~~~~\text{a.s.}
\end{equation}
Taking the superior limit on both sides of \eqref{6} and combining the resulting inequality  with \eqref{7}, \eqref{8} and \eqref{9}  lead us to 
\begin{align*}
\limsup_{t\to\infty}\dfrac{W\big(I(t),\widehat{I}(t)\big)}{t}&\leqslant \mathfrak{D}-\mathfrak{C}-\mathfrak{B}+\dfrac{\lambda_1 b \beta}{\lambda_2}\frac{\Lambda}{2\mu}\left(\dfrac{2\mu}{\Upsilon}-1\right)^{\frac{1}{2}}+\dfrac{\lambda_2 b \hat{\beta}}{\lambda_1}\frac{\hat{\Lambda}}{2\hat{\mu}}\left(\dfrac{2\hat{\mu}}{\widehat{\Upsilon}}-1\right)^{\frac{1}{2}}\\
&=\mathfrak{D}-\mathfrak{C}-\mathfrak{B}+\dfrac{\hat{\mu}\sqrt{\mathcal{R}_0}}{2}\left(\dfrac{2\mu}{\Upsilon}-1\right)^{\frac{1}{2}}+\dfrac{(\mu+\rho)\sqrt{\mathcal{R}_0}}{2}\left(\dfrac{2\hat{\mu}}{\widehat{\Upsilon}}-1\right)^{\frac{1}{2}}=\mathlarger{\mathlarger{\kappa}},
\end{align*}
which is exactly the desired conclusion. In addition, it goes without saying that if $\mathlarger{\mathlarger{\mathlarger{\kappa}}}<0$ then the disease will die out exponentially almost surely. Thus, the  theorem is proved. \vspace*{10pt}
\end{proof}
\begin{rema}
Compared to several existing works (see for instance \cite{liu2018stationary, liu2019dynamics,  liu2020siri, zhou2020stationary, liu2020dynamical, han2020stationary}), the statement of the last theorem is indeed stronger because it offers a sharper threshold that weakens the disease extinction condition. The precision of our threshold $\mathlarger{\mathlarger{\kappa}}$ comes back essentially to inequality \eqref{5.5} in the previous proof where we used the ramp function $(x\mapsto x^+)$ instead of the absolute value always adopted in the literature to our best knowledge.
\end{rema}
\section{Persistence in the mean of the dengue disease}\label{sec3}
After having studied the extinction of the dengue fever, we turn now to explore its persistence in the mean, but before doing so, let us first recall the definition of this notion. 
\begin{defi}[Persistence in the mean \cite{kiouach2021advanced}]
The infectious individuals $I(t)$ and $\widehat{I}(t)$ of the system \eqref{sto1}, are said to \\[5pt] be persistent in the mean if $~~\displaystyle{\liminf\limits_{t\to\infty}\dfrac{1}{t}\int_0^t \left(I(s)+\widehat{I}(s)\right)\textup{d}s>0}$ almost surely. 
\end{defi}
For the sake of greater clarity and readability, we use from now on these notations: \vspace*{15pt}
\begin{itemize}
 \item [$\bullet$]$M_1:=\left(\mu\hspace{-1pt}+\hspace{-1pt}\dfrac{\sigma_1^2}{2}+\displaystyle{\int_{\mathbb{U}}\Big(\xi_1(u)-\ln\big(1+\xi_1(u)\big)\Big)\nu(\textup{d}u)}\right).$\vspace*{3pt}
 \item [$\bullet$]$\displaystyle{M_2:=\left(\left(\mu+\rho\right)+\dfrac{\sigma_2^2}{2}+\int_{\mathbb{U}}\Big(\xi_2(u)-\ln\big(1+\xi_2(u)\big)\Big)\nu(\textup{d}u)\right)}.$\vspace*{3pt}
 \item [$\bullet$]$\displaystyle{M_3:=\left(\hat{\mu}+\dfrac{\sigma_3^2}{2}+\int_{\mathbb{U}}\Big(\xi_3(u)-\ln\big(1+\xi_3(u)\big)\Big)\nu(\textup{d}u)\right)}.$\vspace*{3pt}
 \item [$\bullet$]$\displaystyle{M_4:=\left(\hat{\mu}+\dfrac{\sigma_4^2}{2}+\int_{\mathbb{U}}\Big(\xi_4(u)-\ln\big(1+\xi_4(u)\big)\Big)\nu(\textup{d}u)\right)}.$\vspace*{3pt}
 \item [$\bullet$]$\widetilde{\mathcal{R}}_0:=\displaystyle{\dfrac{b^2\beta\hat{\beta}\Lambda\hat{\Lambda}}{M_1M_2M_3M_4}}.$
\end{itemize} \vspace*{15pt}
\begin{theo}\label{th2}
 Suppose that the assumptions $\mathbf{(A_1)}-\mathbf{(A_5)}$ are verified, and let $\big(\hspace{-0.5pt}S(t),\hspace{-0.5pt}I(t),\hspace{-0.5pt}\widehat{S}(t),\hspace{-0.5pt}\widehat{I}(t)\big)$\hspace{-2pt} be the\hspace{-1pt} solution\hspace{-0.5pt} of system \eqref{sto1} that\hspace{-1pt} starts\hspace{-1pt} from\hspace{-1pt} an \hspace{-1pt} initial value $\hspace{-2pt}$ $\big(\hspace{-0.5pt}S_0,\hspace{-0.5pt}I_0,\hspace{-0.5pt}\widehat{S}_0,\hspace{-0.5pt}\widehat{I}_0\big)$ belonging to the positive orthant  $\mathbb{R}_+^4$.  If the inequality $\widetilde{\mathcal{R}}_0>1$ is true, then the dengue disease presented by equation \eqref{sto1} will persist in the mean almost surely.  
\end{theo}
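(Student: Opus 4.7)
The plan is to construct a weighted logarithmic Lyapunov functional whose weights are engineered so that the classical arithmetic--geometric mean inequality produces the threshold $\widetilde{\mathcal{R}}_0$ itself rather than some coarser sum. Specifically, I would set
\[
V(t) := \frac{\ln S(t)}{M_1} + \frac{\ln I(t)}{M_2} + \frac{\ln \widehat{S}(t)}{M_3} + \frac{\ln \widehat{I}(t)}{M_4},
\]
the choice $c_i=1/M_i$ being forced by the equality condition in AM--GM (it is what makes the four products $c_i M_i$ equal). Applying the generalized It\^{o} formula to each $\ln X_i$, noting that the martingale parts $\sigma_i\,\mathrm{d}\mathcal{B}_i(t)+\int_{\mathbb{U}}\ln(1+\xi_i(u))\,\widetilde{\mathcal{N}}(\mathrm{d}t,\mathrm{d}u)$ are free of $X_i$, one obtains $\mathrm{d}V(t)=\mathcal{L}V\,\mathrm{d}t+\mathrm{d}\mathcal{M}(t)$ with
\[
\mathcal{L}V=\frac{\Lambda}{M_1 S(t)}+\frac{b\beta S(t)\widehat{I}(t)}{M_2 I(t)}+\frac{\hat{\Lambda}}{M_3 \widehat{S}(t)}+\frac{b\hat{\beta}\widehat{S}(t)I(t)}{M_4 \widehat{I}(t)}-4-\frac{b\beta}{M_1}\widehat{I}(t)-\frac{b\hat{\beta}}{M_3}I(t).
\]

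The pivotal step is to apply AM--GM to the four positive terms: their product telescopes to $b^2\beta\hat{\beta}\Lambda\hat{\Lambda}/(M_1 M_2 M_3 M_4)=\widetilde{\mathcal{R}}_0$, hence
\[
\mathcal{L}V\ \geq\ 4\sqrt[4]{\widetilde{\mathcal{R}}_0}-4-\frac{b\beta}{M_1}\widehat{I}(t)-\frac{b\hat{\beta}}{M_3}I(t).
\]
Integrating from $0$ to $t$, dividing by $t$ and rearranging gives
\[
\frac{b\beta}{M_1}\cdot\frac{1}{t}\int_0^t\widehat{I}(s)\,\mathrm{d}s+\frac{b\hat{\beta}}{M_3}\cdot\frac{1}{t}\int_0^t I(s)\,\mathrm{d}s\ \geq\ 4\bigl(\sqrt[4]{\widetilde{\mathcal{R}}_0}-1\bigr)-\frac{V(t)-V(0)}{t}+\frac{\mathcal{M}(t)}{t}.
\]

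It remains to control the two right-hand correction terms. For $\mathcal{M}(t)/t\to 0$ a.s., I would invoke the strong law of large numbers for local martingales exactly as in the proof of Theorem~\ref{th}; the jump parts have finite quadratic-variation rates thanks to $\mathbf{(A_5)}$ (and the Brownian parts have constant integrands, so this is straightforward). For $V(t)/t$, the elementary inequality $\ln x\leq x$ for $x>0$ yields
\[
V(t)\ \leq\ \frac{S(t)}{M_1}+\frac{I(t)}{M_2}+\frac{\widehat{S}(t)}{M_3}+\frac{\widehat{I}(t)}{M_4},
\]
and items $(a)$--$(b)$ of Lemma~\ref{lem} imply $\limsup_{t\to\infty} V(t)/t\leq 0$ a.s., so the only deterministic term that survives as $t\to\infty$ is the positive constant $4(\sqrt[4]{\widetilde{\mathcal{R}}_0}-1)>0$. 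Taking the $\liminf$ of both sides and using $\liminf(f+g)\geq\liminf f+\liminf g$, one concludes $\liminf_{t\to\infty}\frac{1}{t}\int_0^t(I(s)+\widehat{I}(s))\,\mathrm{d}s>0$ almost surely.

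The main obstacle I anticipate is the choice of the weights: a naive equal-weight ansatz $V=\sum\ln X_i$ yields only the strictly stronger threshold $4\sqrt[4]{b^2\beta\hat{\beta}\Lambda\hat{\Lambda}}>M_1+M_2+M_3+M_4$, which by AM--GM implies but is not implied by $\widetilde{\mathcal{R}}_0>1$. The weights $1/M_i$ are precisely those that transform the AM--GM bound into the sharp threshold. A secondary, purely technical point is that individual logarithms $\ln X_i(t)$ may tend to $-\infty$, but this cannot spoil the argument because the bound $\ln x\leq x$ controls $V(t)$ from above without any positive lower bound on the components.
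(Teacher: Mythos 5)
Your proposal is correct and follows essentially the same route as the paper: a weighted logarithmic Lyapunov function whose weights are tuned so that the arithmetic--geometric mean inequality produces $\widetilde{\mathcal{R}}_0$, followed by integration, Lemma \ref{lem} to dispose of the boundary term $V(t)/t$, and the strong law of large numbers for the local martingales under $\mathbf{(A_5)}$. The only difference is cosmetic: the paper takes $\overline{W}=-\sum\theta_i\ln x_i$ with $\theta_2=1$ and $\theta_i=M_2\widetilde{\mathcal{R}}_0/M_i$ for $i\in\{1,3,4\}$, arriving at the lower bound $M_2\big(\widetilde{\mathcal{R}}_0-1\big)$, whereas your uniform weights $1/M_i$ give $4\big(\widetilde{\mathcal{R}}_0^{1/4}-1\big)$; both are positive exactly when $\widetilde{\mathcal{R}}_0>1$.
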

\begin{proof}
Let us consider the function  $\overline{W}$ defined by \vspace*{5pt}
$$\begin{array}{crcl}
\overline{W}:&\mathbb{R}^4_{+}&\longrightarrow &\mathbb{R}\\ 
 &x&\longmapsto &-\sum\limits_{i=1}^4 \theta_i\ln\left(x_i\right),
\end{array}$$ 
\vspace*{5pt}
where $\theta_2=1$ and $\theta_1, \theta_3, \theta_4$ are three positive constants to be chosen suitably later. From It\^{o}'s formula, we have and for all $t\geqslant0$ \vspace*{5pt}
\begin{align*}
\text{d}\overline{W}\big(S(t),I(t),\widehat{S}(t),\widehat{I}(t)\big)&=\mathcal{L}\overline{W}\big(S(t),I(t),\widehat{S}(t),\widehat{I}(t)\big)~\text{d}t-\theta_1\sigma_1\textup{d}\mathcal{B}_1(t)-\sigma_2\textup{d}\mathcal{B}_2(t)-\theta_3\sigma_3\textup{d}\mathcal{B}_3(t)-\theta_4\sigma_4\textup{d}\mathcal{B}_4(t)\\
&\quad-\theta_1\hspace{-2pt}\int_{\mathbb{U}}\ln\big(1+\xi_1(u)\big)\widetilde{\mathcal{N}}(\textup{d}t,\textup{d}u)-\hspace{-3pt}\int_{\mathbb{U}}\ln\big(1+\xi_2(u)\big)\widetilde{\mathcal{N}}(\textup{d}t,\textup{d}u)-\theta_3\hspace{-2pt}\int_{\mathbb{U}}\ln\big(1+\xi_3(u)\big)\widetilde{\mathcal{N}}(\textup{d}t,\textup{d}u)\\
&\quad-\theta_4\int_{\mathbb{U}}\ln\big(1+\xi_4(u)\big)\widetilde{\mathcal{N}}(\textup{d}t,\textup{d}u),
\end{align*}\vspace*{5pt}
where $\mathcal{L}\overline{W}\big(S(t),I(t),\widehat{S}(t),\widehat{I}(t)\big)$ is given by \vspace*{5pt}
\begin{align*}
\mathcal{L}\overline{W}\big(S(t),I(t),\widehat{S}(t),\widehat{I}(t)\big)&=-\theta_1\dfrac{\Lambda}{S(t)}+\theta_1b\beta\widehat{I}(t)+\theta_1\mu+\theta_1\dfrac{\sigma_1^2}{2}+\theta_1\int_{\mathbb{U}}\Big(\xi_1(u)-\ln\big(1+\xi_1(u)\big)\Big)\nu(\textup{d}u)\\
&\quad-\dfrac{b\beta S(t)\widehat{I}(t)}{I(t)}+\left(\mu+\rho\right)+\dfrac{\sigma_2^2}{2}+\int_{\mathbb{U}}\Big(\xi_2(u)-\ln\big(1+\xi_2(u)\big)\Big)\nu(\textup{d}u)\\
&\quad-\theta_3\dfrac{\hat{\Lambda}}{\widehat{S}(t)}+\theta_3b\hat{\beta}I(t)+\theta_3\hat{\mu}+\theta_3\dfrac{\sigma_3^2}{2}+\theta_3\int_{\mathbb{U}}\Big(\xi_3(u)-\ln\big(1+\xi_3(u)\big)\Big)\nu(\textup{d}u)\\
&\quad -\dfrac{\theta_4 b \hat{\beta} \widehat{S}(t)I(t)}{\widehat{I}(t)}+\theta_4\hat{\mu}+\theta_4\dfrac{\sigma_4^2}{2}+\theta_4\int_{\mathbb{U}}\Big(\xi_4(u)-\ln\big(1+\xi_4(u)\big)\Big)\nu(\textup{d}u).
\end{align*}\vspace*{5pt}
After some simplifications, we get \vspace*{5pt}
\begin{align*}
\mathcal{L}\overline{W}\big(S(t),I(t),\widehat{S}(t),\widehat{I}(t)\big)&=-\theta_1\dfrac{\Lambda}{S(t)}-\dfrac{b\beta S(t)\widehat{I}(t)}{I(t)}-\theta_3\dfrac{\hat{\Lambda}}{\widehat{S}(t)}-\dfrac{\theta_4 b \hat{\beta} \widehat{S}(t)I(t)}{\widehat{I}(t)}+\theta_1b\beta\widehat{I}(t)+\theta_3b\hat{\beta}I(t)\\
&\quad+\theta_1\left(\mu+\dfrac{\sigma_1^2}{2}+\int_{\mathbb{U}}\Big(\xi_1(u)-\ln\big(1+\xi_1(u)\big)\Big)\nu(\textup{d}u)\right)\\
&\quad+\left(\left(\mu+\rho\right)+\dfrac{\sigma_2^2}{2}+\int_{\mathbb{U}}\Big(\xi_2(u)-\ln\big(1+\xi_2(u)\big)\Big)\nu(\textup{d}u)\right)\\
&\quad+\theta_3\left(\hat{\mu}+\dfrac{\sigma_3^2}{2}+\int_{\mathbb{U}}\Big(\xi_3(u)-\ln\big(1+\xi_3(u)\big)\Big)\nu(\textup{d}u)\right)\\
&\quad+\theta_4\left(\hat{\mu}+\dfrac{\sigma_4^2}{2}+\int_{\mathbb{U}}\Big(\xi_4(u)-\ln\big(1+\xi_4(u)\big)\Big)\nu(\textup{d}u)\right).
\end{align*} 
From the relation between geometric and arithmetic  means (the first is less than or equal to the second), it follows that \vspace*{8pt}
\begin{align*}
\mathcal{L}\overline{W}\big(S(t),I(t),\widehat{S}(t),\widehat{I}(t)\big)&\leqslant -4\sqrt[4]{\theta_1\theta_3\theta_4b^2\beta\hat{\beta}\Lambda\hat{\Lambda}}+\theta_1b\beta\widehat{I}(t)+\theta_3b\hat{\beta}I(t)\\
&\quad +\theta_1\underbrace{\left(\mu\hspace{-1pt}+\hspace{-1pt}\dfrac{\sigma_1^2}{2}+\hspace{-5pt}\int_{\mathbb{U}}\Big(\xi_1(u)-\ln\big(1+\xi_1(u)\big)\Big)\nu(\textup{d}u)\right)}_{:=M_1}\\
&\quad+\underbrace{\left(\left(\mu+\rho\right)+\dfrac{\sigma_2^2}{2}+\int_{\mathbb{U}}\Big(\xi_2(u)-\ln\big(1+\xi_2(u)\big)\Big)\nu(\textup{d}u)\right)}_{:=M_2}\\
&\quad+\theta_3\underbrace{\left(\hat{\mu}+\dfrac{\sigma_3^2}{2}+\int_{\mathbb{U}}\Big(\xi_3(u)-\ln\big(1+\xi_3(u)\big)\Big)\nu(\textup{d}u)\right)}_{:=M_3}\\
&\quad+\theta_4\underbrace{\left(\hat{\mu}+\dfrac{\sigma_4^2}{2}+\int_{\mathbb{U}}\Big(\xi_4(u)-\ln\big(1+\xi_4(u)\big)\Big)\nu(\textup{d}u)\right)}_{:=M_4}.\\
\end{align*}
By taking $\theta_1=\dfrac{b^2\beta\hat{\beta}\Lambda\hat{\Lambda}}{M_1^2M_3M_4}$, $\theta_3=\dfrac{b^2\beta\hat{\beta}\Lambda\hat{\Lambda}}{M_1M_3^2M_4}$ and $\theta_4=\dfrac{b^2\beta\hat{\beta}\Lambda\hat{\Lambda}}{M_1M_3M_4^2}$, we obtain:\vspace*{-5pt}
$$\mathcal{L}\overline{W}\big(S(t),I(t),\widehat{S}(t),\widehat{I}(t)\big)\leqslant \theta_1b\beta\widehat{I}(t)+\theta_3b\hat{\beta}I(t)+M_2-\dfrac{b^2\beta\hat{\beta}\Lambda\hat{\Lambda}}{M_1M_3M_4}=b\left(\theta_1\beta\widehat{I}(t)+\theta_3\hat{\beta}I(t)\right)-M_2\Big(\overbrace{\dfrac{b^2\beta\hat{\beta}\Lambda\hat{\Lambda}}{M_1M_2M_3M_4}}^{\widetilde{\mathcal{R}}_0}-1\Big).$$ \vspace*{-5pt}
Therefore\vspace*{8pt}
\begin{align}\label{10}\hspace{-1pt}
\text{d}\overline{W}\big(S(t),I(t),\widehat{S}(t),\widehat{I}(t)\big)\hspace{-1pt}&\leqslant \hspace{-1pt}\left[b\left(\theta_1\beta\widehat{I}(t)+\theta_3\hat{\beta}I(t)\right)\hspace*{-2pt}-\hspace*{-2pt}M_2\Big(\widetilde{\mathcal{R}}_0\hspace{-1pt}-\hspace{-1pt}1\Big)\right]\hspace{-1pt}\text{d}t\hspace*{-2pt}-\hspace*{-2pt}\theta_1\sigma_1\textup{d}\mathcal{B}_1(t)\hspace*{-2pt}-\hspace*{-2pt}\sigma_2\textup{d}\mathcal{B}_2(t)\hspace{-1pt}-\hspace{-1pt}\theta_3\sigma_3\textup{d}\mathcal{B}_3(t)\hspace{-1pt}-\hspace{-1pt}\theta_4\sigma_4\textup{d}\mathcal{B}_4(t)\nonumber\\
&\quad-\theta_1\hspace{-2pt}\int_{\mathbb{U}}\ln\big(1+\xi_1(u)\big)\widetilde{\mathcal{N}}(\textup{d}t,\textup{d}u)-\hspace{-3pt}\int_{\mathbb{U}}\ln\big(1+\xi_2(u)\big)\widetilde{\mathcal{N}}(\textup{d}t,\textup{d}u)-\theta_3\hspace{-2pt}\int_{\mathbb{U}}\ln\big(1+\xi_3(u)\big)\widetilde{\mathcal{N}}(\textup{d}t,\textup{d}u)\nonumber\\
&\quad-\theta_4\int_{\mathbb{U}}\ln\big(1+\xi_4(u)\big)\widetilde{\mathcal{N}}(\textup{d}t,\textup{d}u).
\end{align}\vspace*{8pt}
Integrating \eqref{10} from $0$ to $t$, and then dividing by $t$ on both sides, we get \vspace*{10pt}
\begin{align*}
\dfrac{\overline{W}\big(S(t),I(t),\widehat{S}(t),\widehat{I}(t)\big)}{t}-\dfrac{\overline{W}\big(S(0),I(0),\widehat{S}(0),\widehat{I}(0)\big)}{t}&\leqslant \dfrac{b\theta_1\beta}{t}\int_0^t \widehat{I}(s)~\text{d}s+\dfrac{b\theta_3\hat{\beta}}{t}\int_0^t I(s)~\text{d}s-M_2\Big(\widetilde{\mathcal{R}}_0-1\Big)\\
&\quad-\sum_{i=1}^4\theta_i\sigma_i\dfrac{\mathcal{B}_i(t)}{t}-\sum_{i=1}^4\dfrac{\theta_i}{t}\int_0^t\int_{\mathbb{U}}\ln\big(1+\xi_i(u)\big)\widetilde{\mathcal{N}}(\textup{d}s,\textup{d}u)\\
\end{align*}
So
\begin{align*}
\dfrac{\overline{W}\big(S(t),I(t),\widehat{S}(t),\widehat{I}(t)\big)}{t}-\dfrac{\overline{W}\big(S(0),I(0),\widehat{S}(0),\widehat{I}(0)\big)}{t}&\leqslant \dfrac{b\left(\theta_1\beta\vee \theta_3\hat{\beta}\right)}{t}\int_0^t \left(\widehat{I}(s)+ I(s)\right)\:\text{d}s-M_2\Big(\widetilde{\mathcal{R}}_0-1\Big)\\
&\quad-\sum_{i=1}^4\theta_i\sigma_i\dfrac{\mathcal{B}_i(t)}{t}-\sum_{i=1}^4\dfrac{\theta_i}{t}\int_0^t\int_{\mathbb{U}}\ln\big(1+\xi_i(u)\big)\widetilde{\mathcal{N}}(\textup{d}s,\textup{d}u).
\end{align*}
Hence
\begin{align}\label{11}
\dfrac{1}{t}\int_0^t \left(\widehat{I}(s)+ I(s)\right)\:\text{d}s&\geqslant \dfrac{M_2\Big(\widetilde{\mathcal{R}}_0-1\Big)}{b\left(\theta_1\beta\vee \theta_3\hat{\beta}\right)}+\dfrac{\overline{W}\big(S(t),I(t),\widehat{S}(t),\widehat{I}(t)\big)}{b\left(\theta_1\beta\vee \theta_3\hat{\beta}\right)t}-\dfrac{\overline{W}\big(S(0),I(0),\widehat{S}(0),\widehat{I}(0)\big)}{b\left(\theta_1\beta\vee \theta_3\hat{\beta}\right)t}\nonumber\\
&\quad+\sum_{i=1}^4\theta_i\sigma_i\dfrac{\mathcal{B}_i(t)}{b\left(\theta_1\beta\vee \theta_3\hat{\beta}\right)t}+\sum_{i=1}^4\dfrac{\theta_i}{b\left(\theta_1\beta\vee \theta_3\hat{\beta}\right)t}\int_0^t\int_{\mathbb{U}}\ln\big(1+\xi_i(u)\big)\widetilde{\mathcal{N}}(\textup{d}s,\textup{d}u).
\end{align}
Since $\ln(a)\leqslant a-1\leqslant a$ for any $a>0$, one can conclude that $\overline{W}(x)\geqslant -\sum\limits_{i=1}^4\theta_i x_i$ for all $x\in\mathbb{R}_+^4$. Combining this fact with \eqref{11} leads  to \vspace*{5pt}
\begin{align}\label{11.5}
\dfrac{1}{t}\int_0^t \left(\widehat{I}(s)+ I(s)\right)\:\text{d}s&\geqslant \dfrac{M_2\Big(\widetilde{\mathcal{R}}_0-1\Big)}{b\left(\theta_1\beta\vee \theta_3\hat{\beta}\right)}-\dfrac{1}{t}\times\overbrace{\dfrac{\theta_1S(t)+I(t)+\theta_3\widehat{S}(t)+\theta_1\widehat{I}(t)}{b\left(\theta_1\beta\vee \theta_3\hat{\beta}\right)}}^{:=\mathcal{M}_0(t)}-\dfrac{\overline{W}\big(S(0),I(0),\widehat{S}(0),\widehat{I}(0)\big)}{b\left(\theta_1\beta\vee \theta_3\hat{\beta}\right)t}\nonumber\\
&\quad+\dfrac{1}{t}\times\underbrace{\sum_{i=1}^4\theta_i\sigma_i\dfrac{\mathcal{B}_i(t)}{b\left(\theta_1\beta\vee \theta_3\hat{\beta}\right)}}_{:=\mathcal{M}_1(t)}+\dfrac{1}{t}\times\underbrace{\sum_{i=1}^4\dfrac{\theta_i}{b\left(\theta_1\beta\vee \theta_3\hat{\beta}\right)}\int_0^t\int_{\mathbb{U}}\ln\big(1+\xi_i(u)\big)\widetilde{\mathcal{N}}(\textup{d}s,\textup{d}u)}_{:=\mathcal{M}_2(t)}.
\end{align}\vspace*{5pt}
Needless to say, $\mathcal{M}_1(t)$ is a local martingale with finite quadratic variation, and from the assumption  $\mathbf{(A_5)}$  we can assert that  $\mathcal{M}_2(t)$ is also so. Therefore, we deduce by the strong law of large numbers for local martingales that\begin{equation}\label{12}
\lim_{t\to \infty}\dfrac{\mathcal{M}_1(t)}{t}~~\text{and}~~\lim_{t\to \infty}\dfrac{\mathcal{M}_2(t)}{t}~~~~\text{a.s.}
\end{equation}
On the other hand it is clear by virtue of Lemma \ref{lem} that 
\begin{equation}\label{13}
\lim_{t\to \infty}\dfrac{\mathcal{M}_0(t)}{t}~~~~\text{a.s.}
\end{equation}
Taking the inferior limit on both sides of \eqref{11.5} and combining the resulting inequality  with \eqref{12} and \eqref{13}  yields
\begin{equation}
\liminf_{t\to\infty}\dfrac{1}{t}\int_0^t \left(\widehat{I}(s)+ I(s)\right)\:\text{d}s\geqslant \dfrac{M_2\Big(\widetilde{\mathcal{R}}_0-1\Big)}{b\left(\theta_1\beta\vee \theta_3\hat{\beta}\right)}>0~~a.s.
\end{equation}
So if $\widetilde{\mathcal{R}}_0>1$, then the disease will persist in the mean as claimed, which completes the proof. 
\end{proof}
\section{Numerical simulation examples}\label{sec4} \vspace*{0.2cm}
In this section, and by taking the parameter values from the theoretical data presented in Table \ref{tab1}, we set forth some numerical simulations to belay the various results proved in this paper.  The solution of our Dengue model,  is simulated in our case with the initial condition given by $S(0)=0.2, I(0)=0.1,\widehat{S}(0)=0.3$ and $\widehat{I}(0)=0.4$. In what follows, we consider that the unity of time is one day and the number of individuals is expressed in one million population.
\begin{center}\vspace*{1.2cm}
\begin{tabular}{||c||c||c|c||}
\hline
Parameters &Description  &\multicolumn{2}{|c||}{Numerical values} \\
\hline \hline
$\Lambda$ & Humans recruitment rate & 0.5  &0.85\\
\hline
$b$ & Mosquitoes' biting rate  & 3 & 7\\
\hline
$\beta$ & Transmission rate from mosquitoes to humans  &0.15 & 0.65\\
\hline
$\mu$ & Humans' natural death rate &0.8& 0.8\\
\hline
$\rho_0$ & Dengue induced death rate &0.8 &0.8\\
\hline
$\rho_1$ & Heal rate, whether by treatment or naturally &0.02 &0.25\\
\hline
$\hat{\Lambda}$ & Mosquitoes recruitment rate &0.6 &0.6\\
\hline
$\hat{\beta}$ & Transmission rate from humans to mosquitoes  &0.55 &0.55\\
\hline
$\hat{\mu}$ & Mosquitoes' natural death rate &0.9 &0.88\\
\hline
$\sigma_1$ & Intensity of the Brownian motion $\mathcal{B}_1$ &0.269 &0.269\\
\hline
$\sigma_2$ & Intensity of the Brownian motion $\mathcal{B}_2$ &0.25 &0.25\\
\hline
$\sigma_3$ & Intensity of the Brownian motion $\mathcal{B}_3$ &0.25 &0.245\\
\hline
$\sigma_4$ & Intensity of the Brownian motion $\mathcal{B}_4$ &0.13 &0.14\\
\hline
$\xi_1$ & Intensity of the  L\'{e}vy jumps associated to $S$ &-0.75 &-0.75\\
\hline
$\xi_2$ & Intensity of the  L\'{e}vy jumps associated to $I$ &0.8 &0.78\\
\hline
$\xi_3$ & Intensity of the  L\'{e}vy jumps associated to $\widehat{S}$ &-0.9 &-0.9\\
\hline
$\xi_4$ & Intensity of the  L\'{e}vy jumps associated to $\widehat{I}$ &0.85 &0.85\\
\hline
\multicolumn{2}{c||}{} & Figure \ref{Fig1}  & Figure \ref{Fig2}\\ 
\cline{3-4}
\end{tabular}
\captionof{table}{Definitions and nominal values, per day, of the system  parameters and perturbations intensities used in the simulations.}
\label{tab1}
\end{center}\vspace*{1.2cm}
\subsection{The case of dengue fever stochastic extinction}
In this case, and by selecting the parameter values appearing in the third column of Table \ref{tab1},  and also taking  $\mathbb{U}=\mathbb{R}_+$  with $\nu(\mathbb{U})=1$,  we will shed some light on the theoretical results of Section \ref{sec2}. By a few simple calculations, it can be verified that we have the numerical values presented in Table \ref{tab2}. From the latter, we observe easily  that the condition $\mathlarger{\mathlarger{\mathlarger{\kappa}}}<0$, and hypothesis $\mathbf{(A_1)}$, $\mathbf{(A_2)}$, $\mathbf{(A_3)}$ and $\mathbf{(A_4)}$ hold. So, and by the virtue of Theorem \ref{th} the dengue epidemic dies out exponentially almost surely, which is exactly depicted in Figure \ref{Fig1}.\\ \vspace*{10pt}
\begin{flushleft}
\begin{tabular}{l||Sc||Sc||Sc||}
\cline{2-4}
 & Quantity & Expression & Value \\
\cline{2-4}
Assumption $\mathbf{(A_1)}$ $\Bigg\lbrace$ & $\mathfrak{M}_1$ & $\displaystyle{\max_{1\leqslant i \leqslant 4}\left(\int_{\mathbb{U}}\xi_i^2(u)\nu(\text{d}u)\right)}$ & 0.81$<\infty$\\
\cline{2-4}
Assumption $\mathbf{(A_2)}$ $\Bigg\lbrace$ & $\mathfrak{M}_2$ & $\displaystyle{\max_{1\leqslant i \leqslant 4}\left(\int_{\mathbb{U}}\Big(\xi_i(u)-\ln\left(1+\xi_i(u)\right)\Big)\nu(\text{d}u)\right)}$ & 1.4025$<\infty$\\
\cline{2-4}
 & $\Sigma$ & $\max\big\lbrace \sigma_i^2 \mid i\in\{1,2,3,4\}\big\rbrace$ & 0.0724\\
\cline{2-4}
 & $\widetilde{\xi}$ & $\max\big\lbrace \xi_i(u) \mid i\in\{1,2,3,4\} \big\rbrace$ & 0.85\\
\cline{2-4}
 & $\utilde{\xi}$ & $\min\big\lbrace \xi_i(u) \mid i\in\{1,2,3,4\} \big\rbrace$  & -0.9\\
\cline{2-4}
 & $\widetilde{\mathlarger{\mathlarger{\theta}}}_p$ & $\left(1+\widetilde{\xi}\right)^p-p\times\widetilde{\xi}-1$ & 1.5301\\
\cline{2-4}
 & $\utilde{\mathlarger{\mathlarger{\theta}}}_p$ & $\left(1+\utilde{\xi}\right)^p-p\times\utilde{\xi}-1$ & 1.2531\\
\cline{2-4}
& $\mathlarger{\mathlarger{\theta}}_p$ & $\utilde{\mathlarger{\mathlarger{\theta}}}_p \vee \widetilde{\mathlarger{\mathlarger{\theta}}}_p$ & 1.5301\\
\cline{2-4}
& $\varrho_p$ & $\displaystyle{\int_{\mathbb{U}}} \mathlarger{\theta}_p(u) \nu(\text{d}u)$ & 1.5301\\
\cline{2-4}
Assumption $\mathbf{(A_3)}$ $\Bigg\lbrace$& $\Delta_p$ & $\mu\wedge\hat{\mu}-\dfrac{p-1}{2}\Sigma-\dfrac{\varrho_p}{p}$ & 0.13366$>0$\\
\cline{2-4}
Assumption $\mathbf{(A_4)}$ $\Bigg\lbrace$ & $\mathfrak{M}_3$ & $\displaystyle{\max_{1\leqslant i \leqslant 4}\left(\int_{\mathbb{U}}\Big(\big(1+\xi_i(u)\big)^2-1\Big)^2\nu(\text{d}u)\right)}$ & 5.868$<\infty$\\
\cline{2-4}
& $\displaystyle{\Upsilon}$ & $\displaystyle{2\mu-\sigma_1^2-\int_{\mathbb{U}}\xi_1^2(u)\nu(\textup{d}u)}$ & 0.9651\\
\cline{2-4}
& $\displaystyle{\widehat{\Upsilon}}$ & $\displaystyle{2\hat{\mu}-\sigma_3^2-\int_{\mathbb{U}}\xi_3^2(u)\nu(\textup{d}u)}$ & 0.9275\\
\cline{2-4}
& $\underline{\mathfrak{b}}$ & $\big(\xi_2\wedge\xi_4-\ln\left(1+\xi_2\wedge\xi_4\right)\big)\times \mathds{1}_{\left\{\xi_2(u)\wedge\xi_4(u)>0\right\}}$ & 0.2122 \\
\cline{2-4}
& $\overline{\mathfrak{b}}$ & $\big(\xi_2\vee\xi_4-\ln\left(1+\xi_2\vee\xi_4\right)\big)\times \mathds{1}_{\left\{\xi_2(u)\vee\xi_4(u)\leqslant 0\right\}}$ &0\\
\cline{2-4}
& $\mathfrak{B}$ & $\displaystyle{\int_{\mathbb{U}}\Big(\overline{\mathfrak{b}}(u)+\underline{\mathfrak{b}}(u)\Big)~\nu(\textup{d}u)}$& 0.2122\\
\cline{2-4}
&$\mathfrak{C}$ &  $\dfrac{(\sigma_2\sigma_4)^2}{2(\sigma_2^2+\sigma_4^2)}$ & 0.026\\
\cline{2-4}
& $\mathfrak{D}$&$\left[(\mu+\rho)\vee\hat{\mu}\right]\times \left(\sqrt{\mathcal{R}_0}-1\right)^+-\left[(\mu+\rho)\wedge\hat{\mu}\right]\times \left( 1-\sqrt{\mathcal{R}_0}\right)^+$& -0.4854\\
\cline{2-4}
& $\mathcal{R}_0$&$\dfrac{b^2\beta\Lambda\hat{\beta}\hat{\Lambda}}{\mu\left(\mu+\rho_0+\rho_1\right)\hat{\mu}^2}$& 0.2122\\
\cline{2-4}
& $\mathlarger{\mathlarger{\mathlarger{\kappa}}}$ & $\mathfrak{D}-\mathfrak{C}-\mathfrak{B}+\dfrac{\hat{\mu}\sqrt{\mathcal{R}_0}}{2}\left(\dfrac{2\mu}{\Upsilon}-1\right)^{\frac{1}{2}}+\dfrac{(\mu+\rho)\sqrt{\mathcal{R}_0}}{2}\left(\dfrac{2\hat{\mu}}{\widehat{\Upsilon}}-1\right)^{\frac{1}{2}}$&-0.2044$<0$\\
\cline{2-4}
\end{tabular}
\captionof{table}{Some quantities and their corresponding numerical values in the case of $p=2.5>2$.}
\label{tab2} \vspace*{10pt}
\end{flushleft}

\subsection{The case of dengue fever persistence in the mean}
In this subsection, we will turn to the dengue's persistence in the mean case. By keeping in mind the numerical values appearing in the last column of Table \ref{tab1}, we can easily draw up the following list:
\begin{flushleft}
\begin{tabular}{l||Sc||Sc||Sc||}
\cline{2-4}
 & Quantity & Expression & Value \\
\cline{2-4}
Assumption $\mathbf{(A_1)}$ $\Bigg\lbrace$ & $\mathfrak{M}_1$ & $\displaystyle{\max_{1\leqslant i \leqslant 4}\left(\int_{\mathbb{U}}\xi_i^2(u)\nu(\text{d}u)\right)}$ & 0.81$<\infty$\\
\cline{2-4}
Assumption $\mathbf{(A_2)}$ $\Bigg\lbrace$ & $\mathfrak{M}_2$ & $\displaystyle{\max_{1\leqslant i \leqslant 4}\left(\int_{\mathbb{U}}\xi_i(u)-\ln\left(1+\xi_i(u)\right)\nu(\text{d}u)\right)}$ & 1.4025$<\infty$\\
\cline{2-4}
& $\Sigma$ & $\max\big\lbrace \sigma_i^2 \mid i\in\{1,2,3,4\}\big\rbrace$ & 0.0724\\
\cline{2-4}
 & $\widetilde{\xi}$ & $\max\big\lbrace \xi_i(u) \mid i\in\{1,2,3,4\} \big\rbrace$ & 0.85\\
\cline{2-4}
 & $\utilde{\xi}$ & $\min\big\lbrace \xi_i(u) \mid i\in\{1,2,3,4\} \big\rbrace$  &-0.9\\
\cline{2-4}
 & $\widetilde{\mathlarger{\mathlarger{\theta}}}_p$ & $\left(1+\widetilde{\xi}\right)^p-p\times\widetilde{\xi}-1$ & 1.5301\\
\cline{2-4}
 & $\utilde{\mathlarger{\mathlarger{\theta}}}_p$ & $\left(1+\utilde{\xi}\right)^p-p\times\utilde{\xi}-1$ & 1.2531\\
\cline{2-4}
& $\mathlarger{\mathlarger{\theta}}_p$ & $\utilde{\mathlarger{\mathlarger{\theta}}}_p \vee \widetilde{\mathlarger{\mathlarger{\theta}}}_p$ & 1.5301\\
\cline{2-4}
& $\varrho_p$ & $\displaystyle{\int_{\mathbb{U}}} \mathlarger{\theta}_p(u) \nu(\text{d}u)$ & 1.5301\\
\cline{2-4}
Assumption $\mathbf{(A_3)}$ $\Bigg\lbrace$ & $\Delta_p$ & $\mu\wedge\hat{\mu}-\dfrac{p-1}{2}\Sigma-\dfrac{\varrho_p}{p}$ & 1.13366$>0$\\
\cline{2-4}
Assumption $\mathbf{(A_4)}$ $\Bigg\lbrace$ & $\mathfrak{M}_3$ & $\displaystyle{\max_{1\leqslant i \leqslant 4}\left(\int_{\mathbb{U}}\Big(\big(1+\xi_i(u)\big)^2-1\Big)^2\nu(\text{d}u)\right)}$ & 5.868$<\infty$\\
\cline{2-4}
Assumption $\mathbf{(A_5)}$ $\Bigg\lbrace$ & $\mathfrak{M}_4$ & $\displaystyle{\max_{1\leqslant i \leqslant 4}\left(\int_{\mathbb{U}}\Big(\ln\big(1+\xi_i(u)\big)\Big)^2\nu(\textup{d}u)\right)}$ & 0.378\\
\cline{2-4}
& $M_1$ & $\mu\hspace{-1pt}+\hspace{-1pt}\dfrac{\sigma_1^2}{2}+\displaystyle{\int_{\mathbb{U}}\Big(\xi_1(u)-\ln\big(1+\xi_1(u)\big)\Big)\nu(\textup{d}u)}$ & 1.4725\\
\cline{2-4}
& $M_2$ & $\left(\mu+\rho\right)+\dfrac{\sigma_2^2}{2}+\displaystyle{\int_{\mathbb{U}}\Big(\xi_2(u)-\ln\big(1+\xi_2(u)\big)\Big)\nu(\textup{d}u)}$ &2.0935\\
\cline{2-4}
& $M_3$ & $\hat{\mu}+\dfrac{\sigma_3^2}{2}+\displaystyle{\int_{\mathbb{U}}\Big(\xi_3(u)-\ln\big(1+\xi_3(u)\big)\Big)\nu(\textup{d}u)}$ & 2.3338\\
\cline{2-4}
& $M_4$ & $\hat{\mu}+\dfrac{\sigma_4^2}{2}+\displaystyle{\int_{\mathbb{U}}\Big(\xi_4(u)-\ln\big(1+\xi_4(u)\big)\Big)\nu(\textup{d}u)}$ & 1.1433\\
\cline{2-4}
& $\widetilde{\mathcal{R}}_0$&$\displaystyle{\dfrac{b^2\beta\hat{\beta}\Lambda\hat{\Lambda}}{M_1M_2M_3M_4}}$&1.0862$>1$\\
\cline{2-4}
\end{tabular}
\captionof{table}{The corresponding values of some quantities  in the case of $p=2.5>2$.}
\label{tab3}\vspace*{10pt}
\end{flushleft}
On the basis of this last table's numerical values, the assumptions $\mathbf{(A_1)}-\mathbf{(A_5)}$ hold, and the quantity $\widetilde{\mathcal{R}}_0$ outnumbers one. So, and by Theorem \ref{th2}, the dengue fever is persistent in the mean, which agrees well with the pictorial curves presented in Figure \ref{Fig2}.
    
\begin{rema}
Plainly, and in all the simulation examples mentioned above, assumptions $\mathbf{(A_1)}$ and $\mathbf{(A_2)}$ are satisfied. So, and since $\big(S(0),I(0),\widehat{S}(0),\widehat{I}(0)\big)\in\mathbb{R}_+^4$, Theorem \ref{th0} guarantees the positivity of the solution for all  future time $t>0$. This last fact is illustrated and clearly corroborated by the different curves depicted in figures \ref{Fig1} and \ref{Fig2}.
\end{rema}
\begin{rema}
If we kept the same approach as \cite{liu2018stationary, liu2019dynamics,liu2020siri,zhou2020stationary,liu2020dynamical,han2020stationary} in our treatment, the threshold that we will obtain in Theorem \ref{th} will be of the following form $K:=\mathfrak{D}-\mathfrak{C}-\mathfrak{B}+\hat{\mu}\times\sqrt{\mathcal{R}_0\left(\dfrac{2\mu}{\Upsilon}-1\right)}+(\mu+\rho)\times\sqrt{\mathcal{R}_0\left(\dfrac{2\hat{\mu}}{\widehat{\Upsilon}}-1\right)}.$  
The latter, and in the case of the numerical values taken in Figure \ref{Fig1}, is positive, therefore, it will be unable to guarantee the disease's extinction. On the other hand, the new threshold $\mathlarger{\mathlarger{\mathlarger{\kappa}}}$ that we have proposed is well capable of doing it, and this reflects clearly its sharpness.
\end{rema}
\begin{figure}[H]
\centering
\subfigure{
    \includegraphics[width=.365\linewidth]{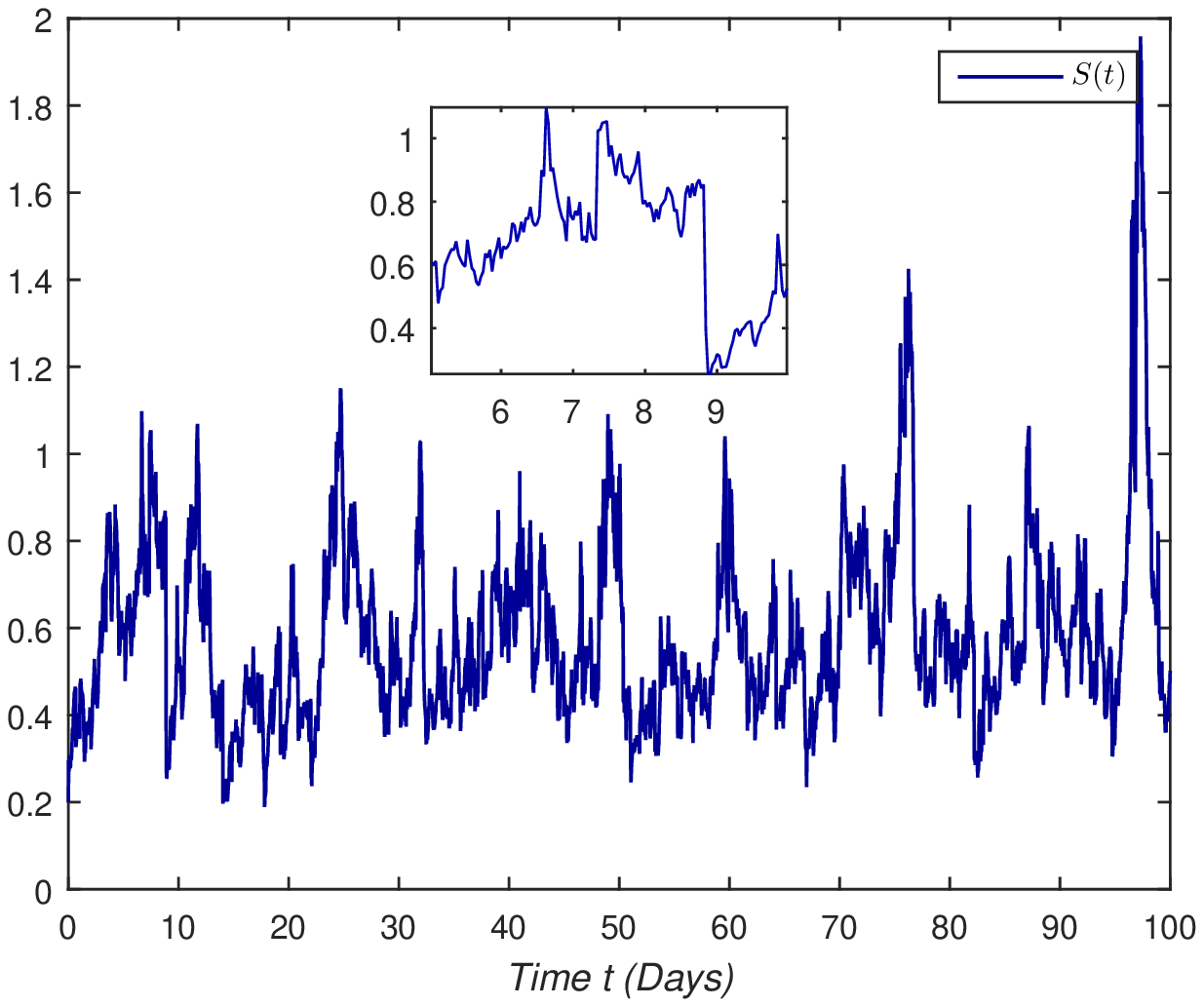}
  }%
\subfigure{
    \includegraphics[width=.365\linewidth]{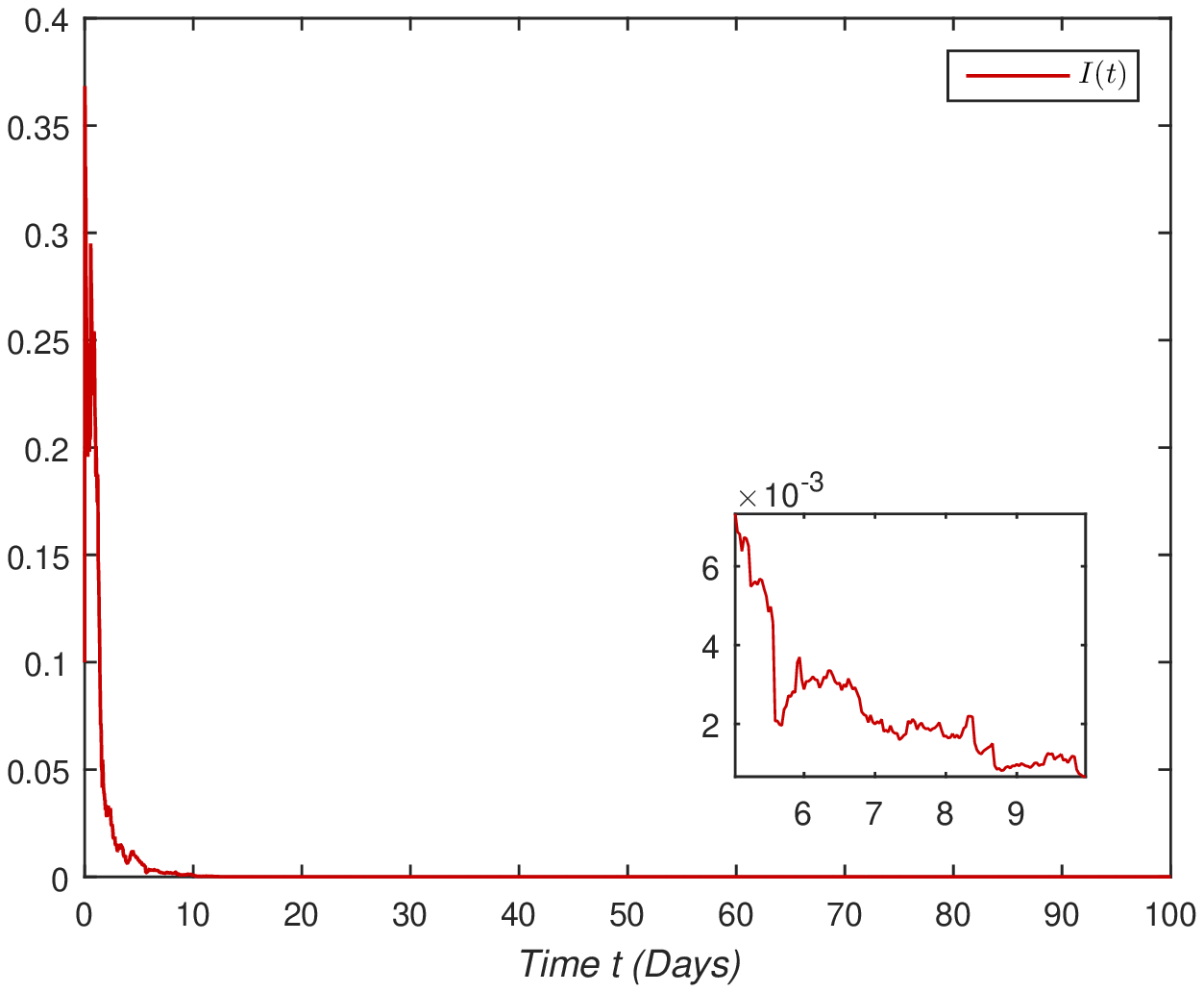}
  }\\[-2pt]
  \subfigure{
    \includegraphics[width=.365\linewidth]{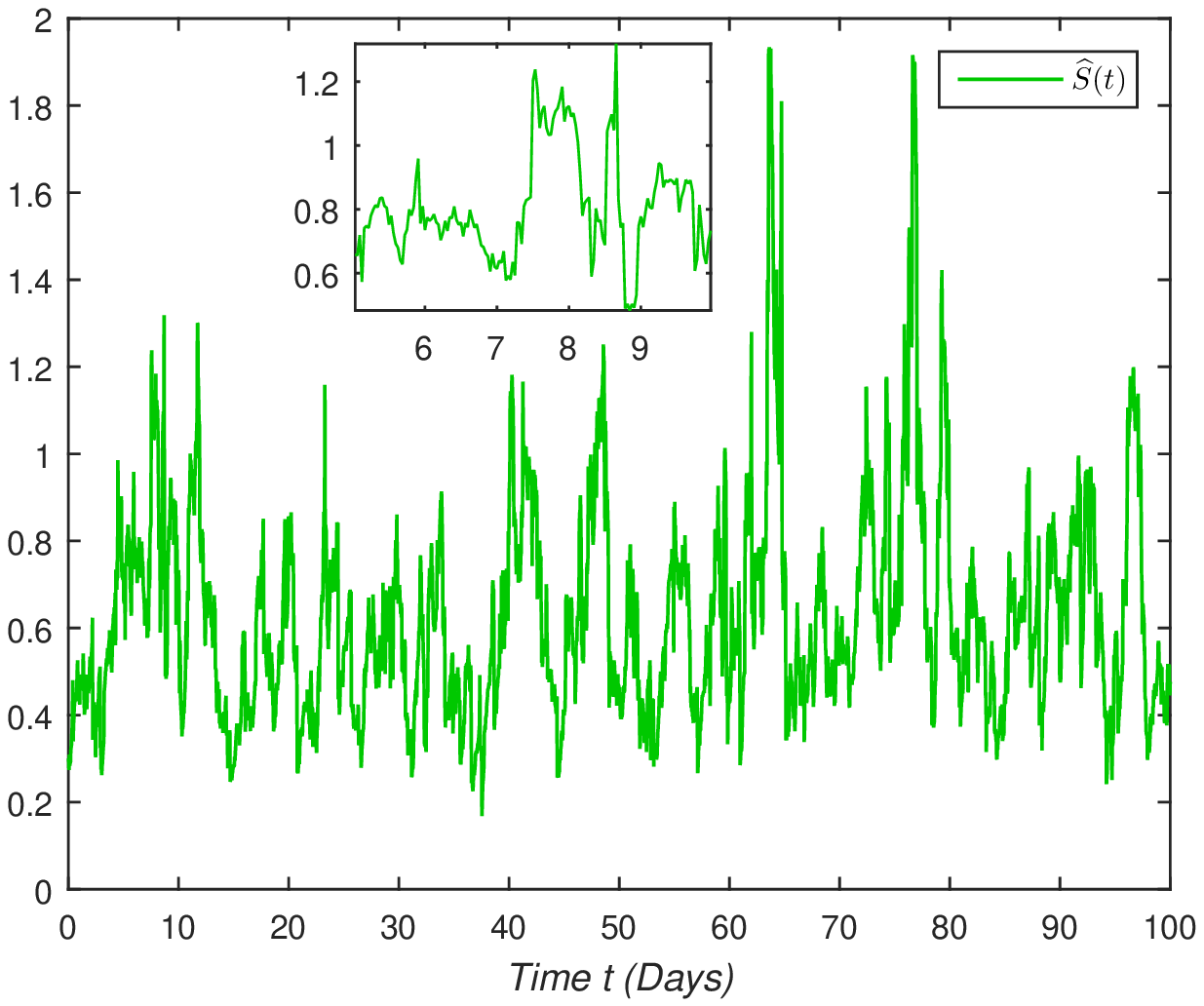}
  }%
\subfigure{
    \includegraphics[width=.365\linewidth]{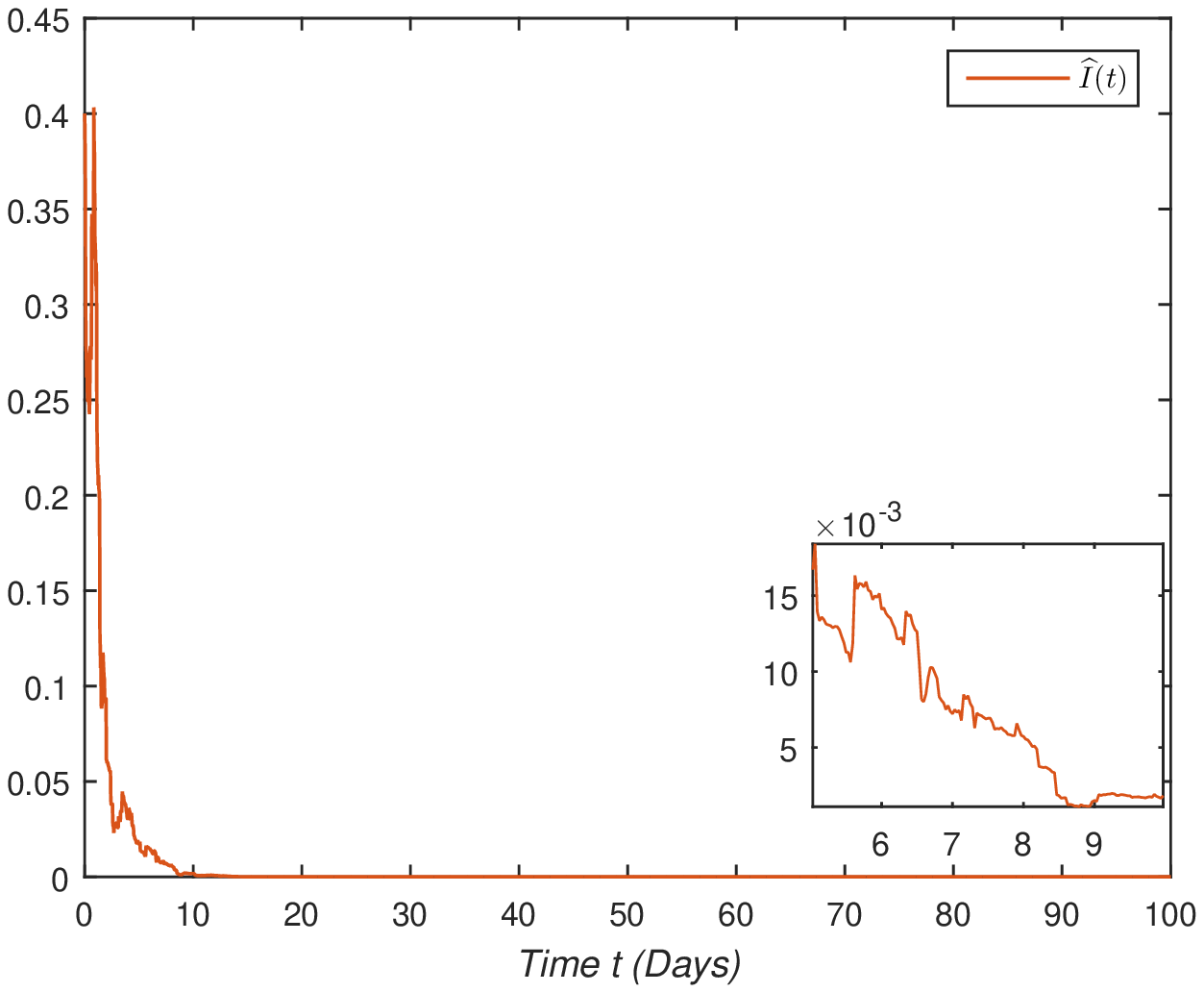}
    }
  \vspace*{-6pt}
 \caption{Solutions' paths of the dengue fever model \eqref{sto1} when the numerical values are taken as shown in the third column of Table \ref{tab1} ($\mathlarger{\mathlarger{\mathlarger{\kappa}}}=-0.2122<0$).}\label{Fig1}
\end{figure}
\begin{figure}[H]
\centering
\subfigure{
    \includegraphics[width=.365\linewidth]{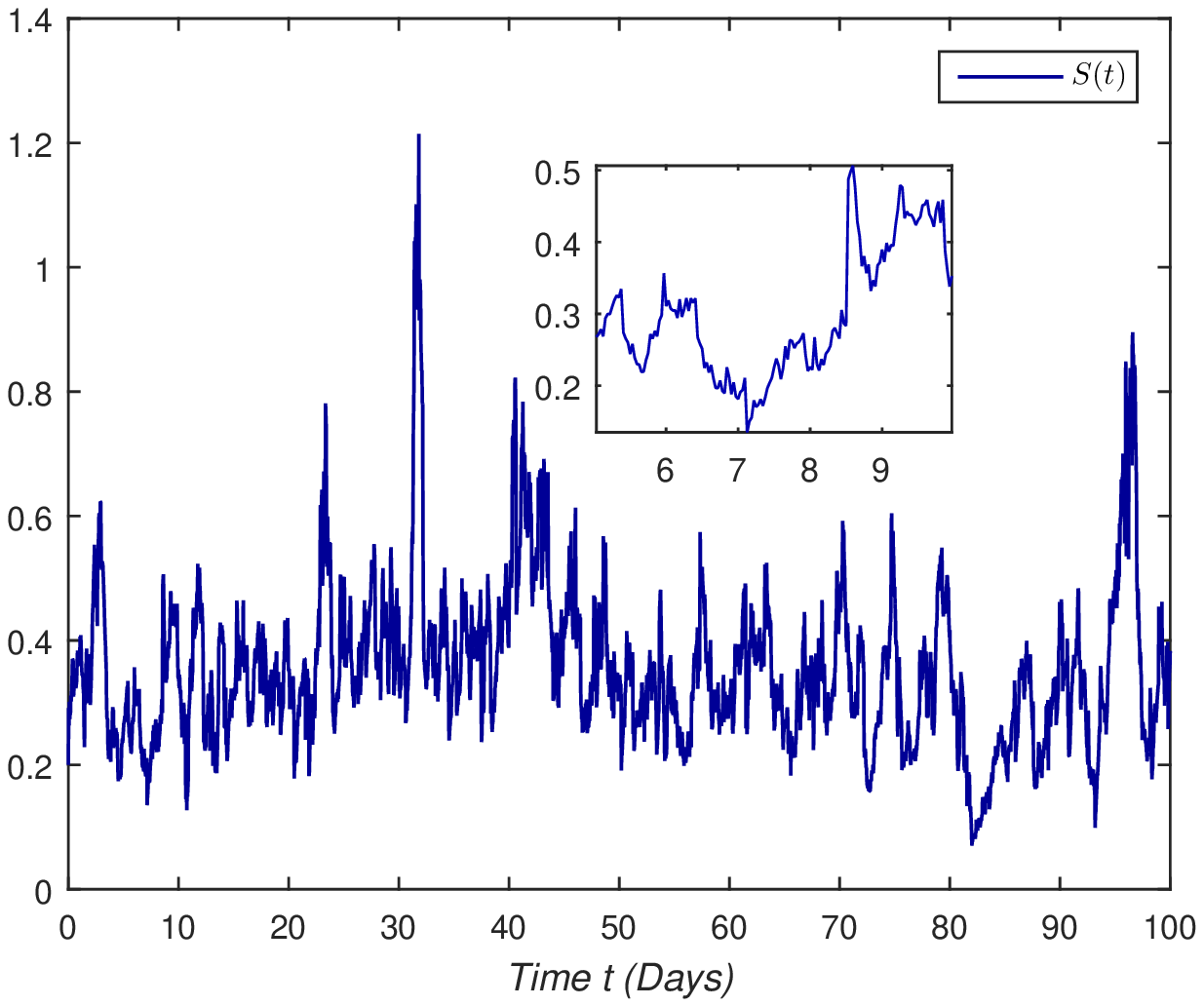}
  }%
\subfigure{
    \includegraphics[width=.365\linewidth]{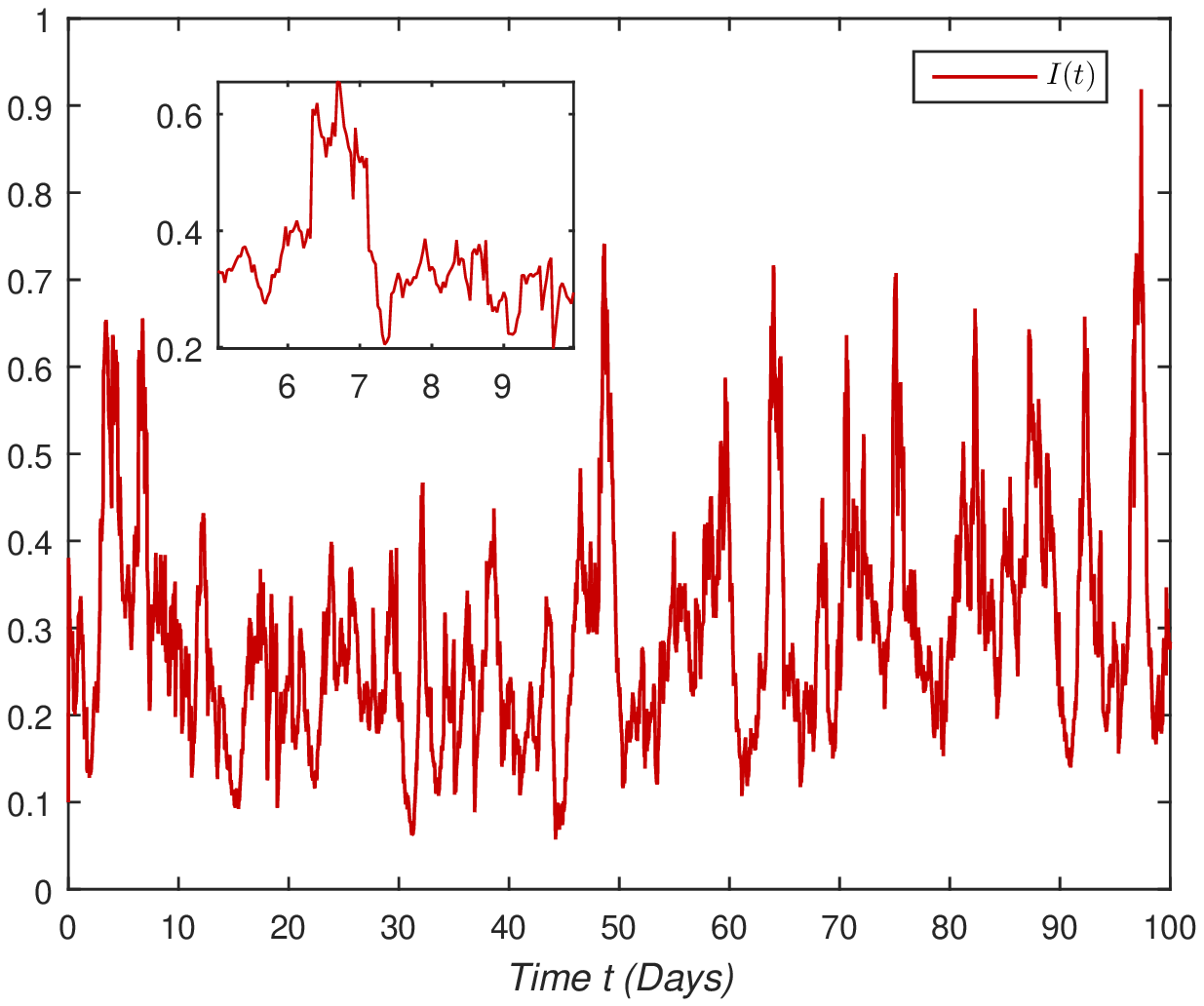}
  }\\[-2pt]
  \subfigure{
    \includegraphics[width=.365\linewidth]{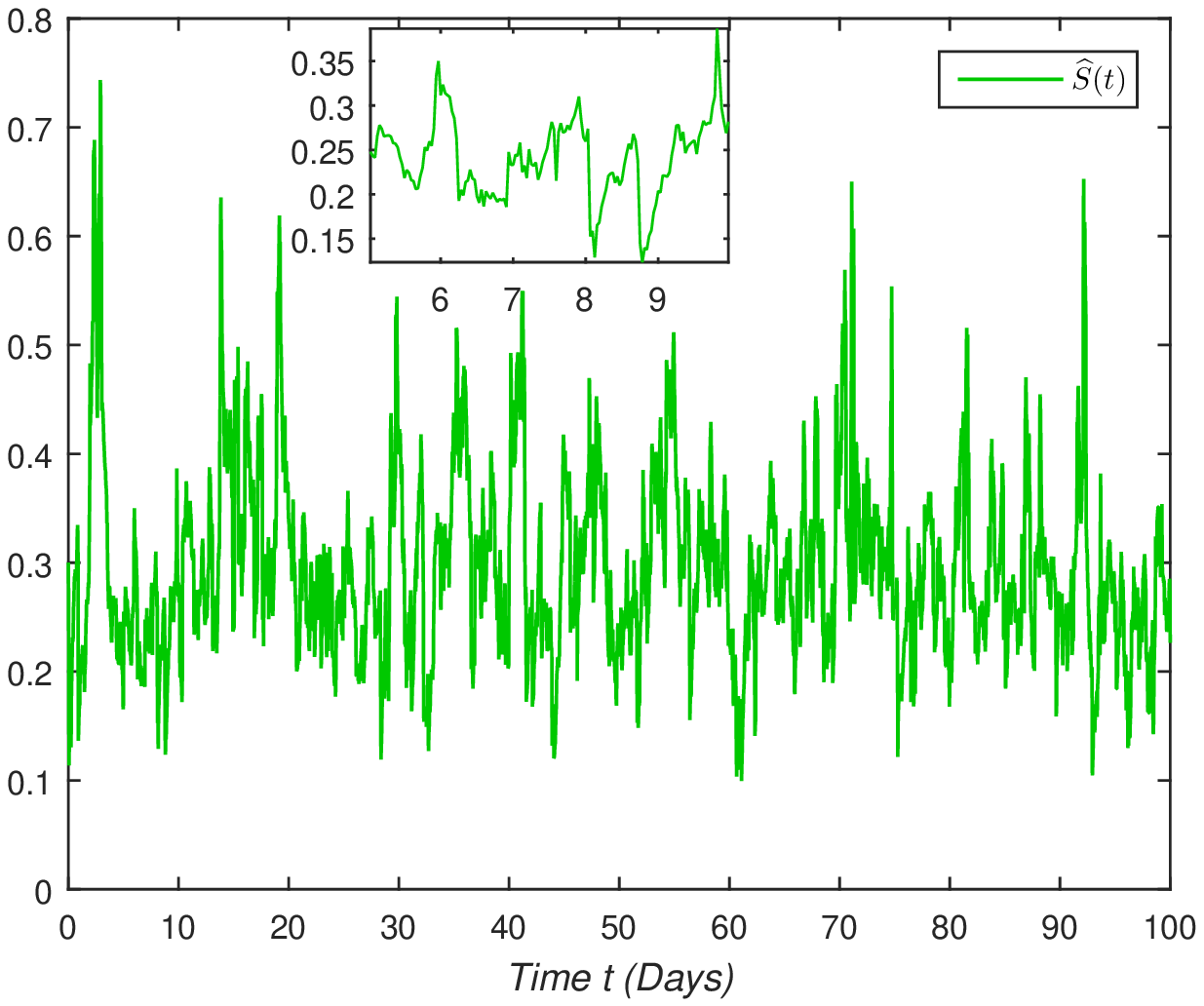}
  }%
\subfigure{
    \includegraphics[width=.365\linewidth]{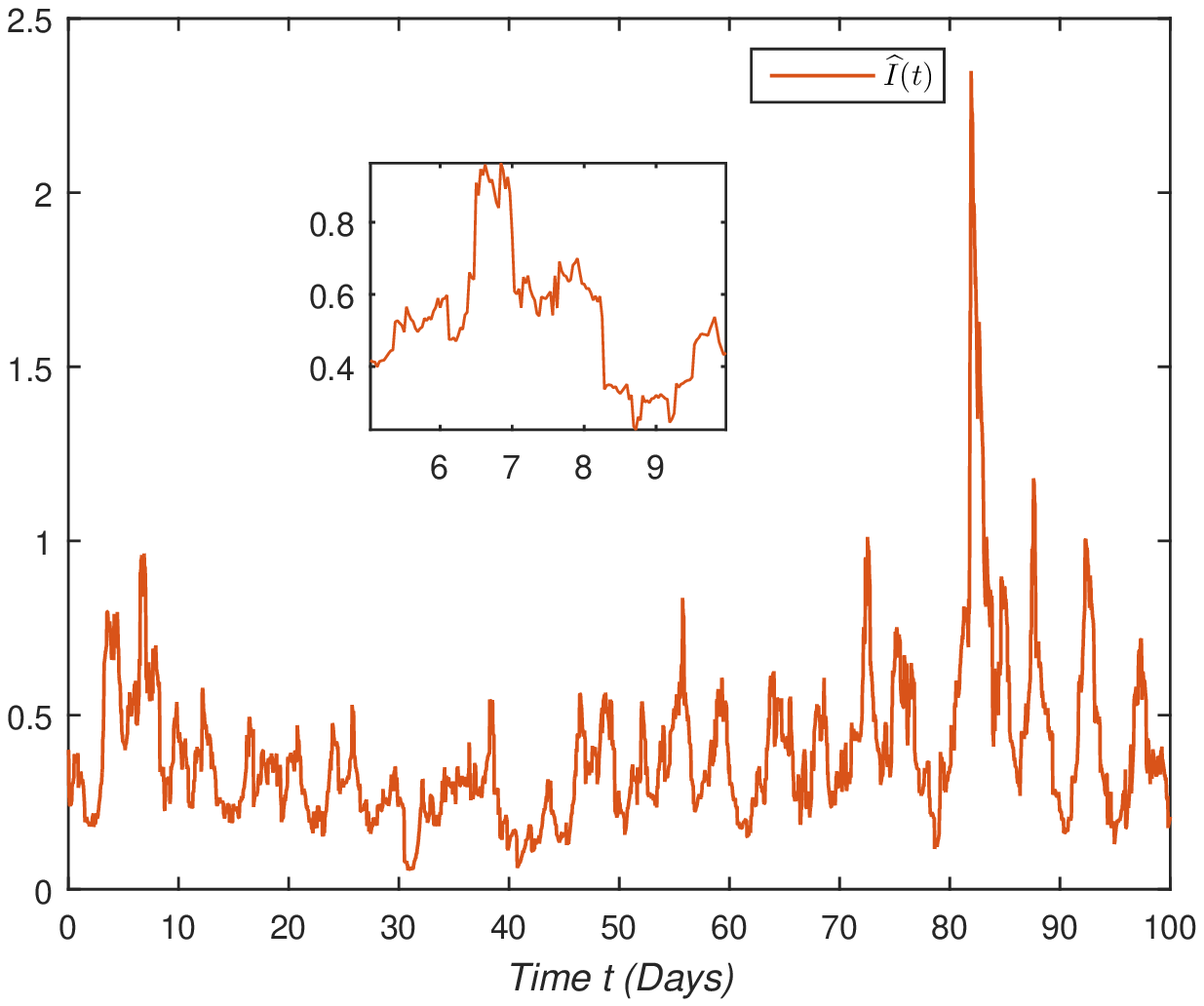}
    }
  \vspace*{-6pt}
 \caption{Solutions' paths of the dengue fever model \eqref{sto1} when the numerical values are chosen as shown in the fourth column of Table \ref{tab1} ($\widetilde{\mathcal{R}}_0=1.0862>1$).}\label{Fig2}
\end{figure}
\section{Conclusion and discussion}\label{sec5}
Dengue fever is an arthropod-borne viral epidemic conveyed to humans through the bite of an infected mosquito with one of the dengue virus's serotypes which all belong to the Flaviviridae viruses family \cite{khan2021dengue}. The dangerousness of this disease lies mainly in its ability to affect almost all age groups, ranging from infants to adults, and in its capacity to re-emerge rapidly in almost every human host population. The absence of an approved treatment or effective vaccine against dengue makes a good comprehension of its prevalence dynamics the only remaining solution to mitigate its severity. In this context, our work presents a mathematical compartmental model that describes the dengue disease dissemination under environmental small disturbances and unexpected massive external perturbations. More explicitly, we have formulated the dengue spread mechanisms by using an SIR-SI stochastic differential equations system that includes both proportional white noises and L\'{e}vy jumps. After having drawn up our model, a rigorous mathematical analysis is performed to get an insight into the dengue fever propagation behavior, especially in the long-term. The principal mathematical and epidemiological findings of our paper are listed as follows:
\begin{itemize}
\item[$\bullet$] We have demonstrated the existence and uniqueness of a positive global-in-time solution to our proposed dengue model.
\item[$\bullet$] We have provided some sufficient conditions for the dengue fever extinction.
\item[$\bullet$] An appropriate hypothetical framework for the persistence in the mean of the dengue disease is also established.
\end{itemize}
Compared to the existing works, the originality of our article resides in the following mathematical techniques and amelioration that we have used to accomplish our analysis:
\begin{itemize}
\item[$\bullet$] Our study adopted an alternative approach to estimate the time averages $\dfrac{\int_0^t\Psi(s)\:\textup{d}s}{t}$, $\dfrac{\int_0^t\Psi^2(s)\:\textup{d}s}{t}$, $\dfrac{\int_0^t\widehat{\Psi}(s)\:\textup{d}s}{t}$  and $\dfrac{\int_0^t\widehat{\Psi}^2(s)\:\textup{d}s}{t}$, without resorting to the explicit stationary distribution's formula which still unknown until \\[5pt] now (see \cite{zhao2018sharp,zhao2019stochastic}).
\item[$\bullet$] In our analysis, we have used the inequality \eqref{5} instead of $\ln(x+1)\leqslant x$ for all $x>-1$, adopted usually in the literature (see for example \cite{cheng2019dynamics}), which allowed us to obtain a sharper threshold for the dengue extinction case.
\item[$\bullet$] Our paper is distinguished from several previous works \cite{liu2018stationary,liu2019dynamics,liu2020siri,zhou2020stationary,liu2020dynamical,han2020stationary} by the use of the ramp function in place of the absolute value in inequality \eqref{5.5}, which remarkably improved the extinction threshold and hence strengthened the statement of Theorem \ref{th}. 
\end{itemize}
Roughly speaking, our theoretical results show that the extinction and persistence conditions depend mainly on the white noise intensities, L\'{e}vy jumps magnitudes, and the system parameters of course. In order to elucidate the theoretical results and exhibit the effect of replacing the absolute value by the ramp function in the inequality \eqref{5.5}, we have presented some numerical simulation examples. In the end, we point out that the obtained results generalize several previous works (for instance, \cite{cai2009global} and \cite{liu2018stationary}), and improve our understanding of the dengue's spreading comportment, which makes this work a good basis for future studies, especially with the continuous reappearance of the dengue fever disease in many regions around the globe. 
\section*{Funding}
This research did not receive any specific grant from funding agencies in the public, commercial, or not-for-profit sectors.
\section*{Data Availability}
The theoretical data used to support the findings of this study are already included in the article.
\section*{Authors Contributions}
The authors declare that the study was conducted in collaboration with the same responsibility. All authors read and approved the final manuscript.
\section*{Conflicts of interest}
On behalf of all authors, the corresponding author states that there is no conflict of interest.
\bibliographystyle{elsart-num}
\bibliography{bibl} 
\end{document}